\documentclass[12pt,leqno]{article}
\usepackage{hyperref} % conflicts with drftcite
\usepackage{enumitem}
\usepackage{xcolor}
\usepackage{soul}
\usepackage[doc,com]{optional}
\usepackage[doc]{optional}
\definecolor{labelkey}{rgb}{0,0.08,0.45}
\definecolor{rekey}{rgb}{0,0.6,0.0}
\definecolor{Brown}{rgb}{0.45,0.0,0.05}
\usepackage{exscale,relsize}
\usepackage{amsmath}
\usepackage{amsfonts}
\usepackage{amssymb}
\usepackage{calc}
\usepackage{theorem}
\usepackage{pifont}      %needed by dingautolist
\usepackage{graphicx}
\oddsidemargin -0.1cm
\textwidth  16.5cm
\topmargin  0.0cm
\headheight 0.0cm
\textheight 21.0cm
\parindent  4mm
\parskip    10pt
\tolerance  3000

\newcommand{\scal}[2]{\langle{{#1},{#2}}\rangle}

\newcommand{\exi}{\ensuremath{\exists\,}}

\newcommand{\RR}{\ensuremath{\mathbb R}}

\newcommand{\RX}{\ensuremath{\,\left]-\infty,+\infty\right]}}
\newcommand{\RXX}{\ensuremath{\,\left[-\infty,+\infty\right]}}

\newcommand{\NN}{\ensuremath{\mathbb N}}
\newcommand{\nnn}{\ensuremath{{n \in \NN}}}
\newcommand{\thalb}{\ensuremath{\tfrac{1}{2}}}

\newcommand{\menge}[2]{\big\{{#1} \mid {#2}\big\}}

\newcommand{\To}{\ensuremath{\rightrightarrows}}

\newcommand{\dom}{\ensuremath{\operatorname{dom}}}

\newcommand{\gra}{\ensuremath{\operatorname{gra}}}

\newcommand{\intdom}{\ensuremath{\operatorname{int}\operatorname{dom}}\,}
\newcommand{\inte}{\ensuremath{\operatorname{int}}}

\newcommand{\ran}{\ensuremath{\operatorname{ran}}}

\newcommand{\Id}{\ensuremath{\operatorname{Id}}}

\newcommand{\weakly}{\ensuremath{\,\rightharpoonup}\,}

\newcommand{\minf}{\ensuremath{-\infty}}
\newcommand{\pinf}{\ensuremath{+\infty}}

\renewcommand{\phi}{\ensuremath{\varphi}}

\newtheorem{theorem}{Theorem}[section]
\newtheorem{lemma}[theorem]{Lemma}
\newtheorem{fact}[theorem]{Fact}
\newtheorem{corollary}[theorem]{Corollary}
\newtheorem{proposition}[theorem]{Proposition}
\newtheorem{definition}[theorem]{Definition}

\theoremstyle{plain}{\theorembodyfont{\rmfamily}
}
\theoremstyle{plain}{\theorembodyfont{\rmfamily}
}
\theoremstyle{plain}{\theorembodyfont{\rmfamily}
}
\theoremstyle{plain}{\theorembodyfont{\rmfamily}
\newtheorem{example}[theorem]{Example}}
\theoremstyle{plain}{\theorembodyfont{\rmfamily}
\newtheorem{remark}[theorem]{Remark}}

\theoremstyle{plain}{\theorembodyfont{\rmfamily}
}

%\def\endproof{\vbox{\hrule height0.6pt\hbox{\vrule height1.3ex%
%width0.6pt\hskip0.8ex\vrule width0.6pt}\hrule height0.6pt}}
%\numberwithin{equation}{section}

%-------------------------------------------------------------------------
\begin{document}

%\sffamily

\title{\sffamily{Rectangularity and paramonotonicity\\
of maximally monotone operators}}

\author{
Heinz H.\ Bauschke\thanks{Mathematics, Irving K.\ Barber School,
University of British Columbia, Kelowna, B.C. V1V 1V7, Canada. E-mail:
\texttt{heinz.bauschke@ubc.ca}.},\;
 Xianfu
Wang\thanks{Mathematics, Irving K.\ Barber School, University of British Columbia,
Kelowna, B.C. V1V 1V7, Canada. E-mail:
\texttt{shawn.wang@ubc.ca}.},\; and Liangjin\
Yao\thanks{Mathematics, Irving K.\ Barber School, University of British Columbia,
Kelowna, B.C. V1V 1V7, Canada.
E-mail:  \texttt{ljinyao@interchange.ubc.ca}.}}

\date{January 20, 2012}
\maketitle

\begin{abstract} \noindent
Maximally monotone operators play a key role in modern optimization and
variational analysis. Two useful subclasses are rectangular (also
known as star monotone) and paramonotone operators, which were
introduced by Brezis and Haraux, and by Censor, Iusem and Zenios,
respectively. The former class has useful range properties while
the latter class is of importance for interior point methods and
duality theory. 
Both notions are automatic for subdifferential operators and
known to coincide for certain matrices; however, more precise
relationships between rectangularity and paramonotonicity were
not known. 

Our aim is to provide new results and examples concerning these
notions. It is shown that rectangularity and paramonotonicity are
actually independent. 
Moreover, for linear relations, rectangularity implies
paramonotonicity but the converse implication requires additional
assumptions. We also consider continuous linear monotone operators,
and we point out that in Hilbert space 
both notions are automatic for certain displacement mappings. 
\end{abstract}

\noindent {\bfseries 2010 Mathematics Subject Classification:}\\
{Primary  47H05;
Secondary
47A06, 47B25, 47H09}

\noindent {\bfseries Keywords:}
Adjoint,
displacement mapping, 
linear relation,
maximally monotone,
nonexpansive,
paramonotone,
rectangular,
star monotone.

\section{Introduction}

Monotone operators continue to play a fundamental role
in optimization and nonlinear analysis as the
books 
\cite{BC2011},
\cite{BorVan},
\cite{Brezis}, 
\cite{BurIus},
\cite{GK},
\cite{GR}, 
\cite{ph},
\cite{Si},
\cite{Si2},
\cite{RockWets},
\cite{Zalinescu}, 
\cite{Zeidler2A}, and
\cite{Zeidler2B}
clearly demonstrate.
(See also the very recent thesis \cite{YaoPhD}.)
Let us start by reminding the reader on the various notions
and some key results.
To this end, we assume throughout the paper that
\begin{equation}
\text{
$X$ is a real Banach space with norm $\|\cdot\|$,
}
\end{equation}
that $X^*$ is the (continuous) dual of $X$, and
that $X$ and $X^*$ are paired by $\scal{\cdot}{\cdot}$.
It will be convenient at times to 
identify $X$ with its canonical image in the bidual space $X^{**}$.
Furthermore, $X\times X^*$ and $(X\times X^*)^* = X^*\times X^{**}$
are  paired via 
$\langle(x,x^*),(y^*,y^{**})\rangle=
\scal{x}{y^*} + \scal{x^*}{y^{**}}$, for all 
$(x,x^*)\in X\times X^*$ and  $(y^*,y^{**}) \in X^*\times X^{**}$.
Now let 
\begin{equation}
A\colon X\To X^*
\end{equation}
be a \emph{set-valued operator} from $X$ to $X^*$, 
i.e., $(\forall x\in X)$ $Ax\subseteq X^*$. 
We write $\gra A = \menge{(x,x^*)\in X\times X^*}{x^*\in Ax}$  for
the \emph{graph} of $A$. 
The \emph{inverse} of $A$, written $A^{-1}$  is given by
$\gra A^{-1} = \menge{(x^*,x)\in X^*\times X}{x^*\in Ax}$. 
The \emph{domain} of $A$ 
is $\dom A= \menge{x\in X}{Ax\neq\varnothing}$, while 
$\ran A=A(X)=\bigcup_{x\in X}Ax$ is 
the \emph{range} of $A$, and
$\ker A=\{x\mid 0\in Ax\}$ is 
the \emph{kernel}. 
Then $A$ is said to be \emph{monotone}, if 
\begin{equation}
\label{e:defmono}
\big(\forall (x,x^*)\in\gra A\big)
\big(\forall (y,y^*)\in\gra A\big)
\quad \scal{x-y}{x^*-y^*}\geq 0;
\end{equation}
if $A$ is monotone and it is impossible to properly enlarge $A$
(in the sense of graph inclusion), then $A$ is \emph{maximally monotone}. 
If the inequality in \eqref{e:defmono} is strict whenever $x\neq y$,
then $A$ is \emph{strictly monotone}. 
If $(z,z^*)\in X\times X^*$
is a point such that the operator with graph 
$\{(z,z^*)\}\cup \gra A$ is monotone,
then $(z,z^*)$ is \emph{monotonically related} to $\gra A$. 

The prime example of a maximally monotone operator is the subdifferential
operator of a function on $X$ that is convex, lower semicontinuous and
proper. However, not every maximally monotone operator arises in this
fashion (e.g., consider a rotator in the Euclidean plane). 
The books listed above have many results on maximally monotone operators.
There are two subclasses of monotone operators that are important
especially in optimization, namely rectangular 
(originally and also known as star or $3^*$ monotone operators) 
and paramonotone operators, which were
introduced by Brezis and Haraux (see \cite{BrezisHaraux}), 
and by Censor, Iusem and Zenios (see \cite{censor} and \cite{iusem}), 
respectively. 
This is due to the fact that the former class has very good range 
properties while the latter class is important in the
study of interior point methods for variational inequalities.
(For very recent papers in which these notions play a central role,
we refer the reader to \cite{BBHM}, \cite{BMMW}, and \cite{BMWrange}.)
Before we turn to precise definitions of these notion, we
recall key properties of the Fitzpatrick function, which has proven
to be a crucial tool in the study of
(maximally) monotone operators. 

\begin{fact}[Fitzpatrick]
{\rm (See {\cite[Proposition~3.2 and Corollary~3.9]{Fitz88}}.)}
\label{f:Fitz} 
Let $A\colon X\To X^*$
 be monotone such that $\gra A\neq\varnothing$.
Define the \emph{Fitzpatrick function} associated with $A$ 
by 
\begin{equation*}
F_A\colon X\times X^*\to\RX\colon
(x,x^*)\mapsto \sup_{(a,a^*)\in\gra A}
\big(\scal{x}{a^*}+\scal{a}{x^*}-\scal{a}{a^*}\big).
\end{equation*}
Then $F_A$ is proper, lower semicontinuous and convex, and
 $F_A=\langle \cdot,\cdot\rangle$ on $\gra A$.
If $A$ is maximally monotone,
then $\scal{\cdot}{\cdot}\leq F_A$ with equality precisely on $\gra A$. 
\end{fact}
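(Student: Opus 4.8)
The plan is to deduce the whole statement from a single bilinearity identity: for all $(x,x^*)\in X\times X^*$ and all $(a,a^*)\in\gra A$,
\begin{equation*}
\scal{x}{a^*}+\scal{a}{x^*}-\scal{a}{a^*}=\scal{x}{x^*}-\scal{x-a}{x^*-a^*}.
\end{equation*}
First I would observe that, for each fixed $(a,a^*)$, the left-hand side is a continuous affine function of $(x,x^*)$, so $F_A$, being their pointwise supremum, is automatically convex and lower semicontinuous; and since $\gra A\neq\varnothing$ it never takes the value $-\infty$.

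Next, for $(x,x^*)\in\gra A$ I would obtain $F_A(x,x^*)\geq\scal{x}{x^*}$ by evaluating the supremum at $(a,a^*)=(x,x^*)$, and $F_A(x,x^*)\leq\scal{x}{x^*}$ from monotonicity, which gives $\scal{x-a}{x^*-a^*}\geq0$ for every $(a,a^*)\in\gra A$ and hence, via the identity, makes every summand $\leq\scal{x}{x^*}$. This yields $F_A=\scal{\cdot}{\cdot}$ on $\gra A$ and, in particular, properness of $F_A$.

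Finally, assuming $A$ maximally monotone, I would read the identity in the other direction: $F_A(x,x^*)\leq\scal{x}{x^*}$ holds if and only if $\scal{x-a}{x^*-a^*}\geq0$ for all $(a,a^*)\in\gra A$, i.e.\ exactly when $(x,x^*)$ is monotonically related to $\gra A$, which by maximality happens exactly when $(x,x^*)\in\gra A$. Splitting into the two cases then gives $\scal{\cdot}{\cdot}\leq F_A$ everywhere, with equality precisely on $\gra A$. I do not anticipate a real obstacle here: no Hahn--Banach argument or genuine variational reasoning is needed for this particular statement, and the only step deserving a moment's care is the clean read-off of the ``monotonically related'' equivalence from the identity.
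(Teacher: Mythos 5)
Your proposal is correct: the paper states this as a cited Fact (Fitzpatrick's Proposition~3.2 and Corollary~3.9) without reproducing a proof, and your argument is the standard one underlying that reference --- $F_A$ as a supremum of continuous affine functions of $(x,x^*)$, the identity $\scal{x}{a^*}+\scal{a}{x^*}-\scal{a}{a^*}=\scal{x}{x^*}-\scal{x-a}{x^*-a^*}$, and the read-off that $F_A(x,x^*)\leq\scal{x}{x^*}$ exactly when $(x,x^*)$ is monotonically related to $\gra A$, which maximality converts into membership in $\gra A$. No gaps; the handling of properness (finiteness on $\gra A$ plus never $-\infty$) and of the ``equality precisely on $\gra A$'' claim is complete.
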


We now present the formal definitions of rectangularity and
paramonotonicity.

\begin{definition}[rectangular and paramonotone]
Let $A\colon X\To X$ be a monotone operator such that 
$\gra A\neq\varnothing$.
Then:
\begin{enumerate}
\item
$A$ is \emph{rectangular
(which is also known as $*$ or $3^*$ monotone; 
see \cite{BrezisHaraux}, \cite[Definition~31.5]{Si2},
and \cite[Definition~32.40(c) on page~901]{Zeidler2B})} if
\begin{align}
\dom A\times \ran A\subseteq\dom F_A.
\end{align}
\item 
$A$ is \emph{paramonotone (see \cite{censor} and \cite{iusem})}
if the implication
\begin{equation}
\left.
\begin{array}{c}
(x,x^*)\in\gra A\\
(y,y^*)\in\gra A\\
\langle x-y,x^*-y^*\rangle=0
\end{array}\right\}
\;\;\Rightarrow\;\;
\big\{(x,y^*), (y,x^*)\big\}\subseteq\gra A
\end{equation}
 holds.
\end{enumerate}
\end{definition}

The following two results illustrate that
rectangularity and paramonotonicity are automatic
for subdifferential operators from convex analysis. 
The first result is due to Brezis and Haraux, who considered
the Hilbert space case in \cite{BrezisHaraux} 
(see \cite[Proposition~32.42]{Zeidler2B} for the Banach space version).

\begin{fact}[Brezis-Haraux]
\label{f:B-H} 
Let $f\colon X\to \RX$ be convex, lower semicontinuous, and proper.
Then $\partial f$ is rectangular.
\end{fact}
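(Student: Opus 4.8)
The plan is to show directly that for $f$ convex, lower semicontinuous, and proper, we have $\dom\partial f\times\ran\partial f\subseteq\dom F_{\partial f}$. Fix $x\in\dom\partial f$ and $y^*\in\ran\partial f$; we must produce a finite upper bound for $F_{\partial f}(x,y^*)=\sup_{(a,a^*)\in\gra\partial f}\big(\scal{x}{a^*}+\scal{a}{y^*}-\scal{a}{a^*}\big)$. The key idea is to invoke the subdifferential (Young--Fenchel) inequality on both slots. Since $y^*\in\ran\partial f$, there is some $b\in X$ with $y^*\in\partial f(b)$, and since $x\in\dom\partial f\subseteq\dom f$, the value $f(x)$ is finite. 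For any $(a,a^*)\in\gra\partial f$ we have $a^*\in\partial f(a)$, so the Fenchel--Young inequality gives $\scal{x-a}{a^*}\leq f(x)-f(a)$; similarly, from $y^*\in\partial f(b)$ we get $\scal{a-b}{y^*}\leq f(a)-f(b)$.

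Adding these two inequalities, the $f(a)$ terms cancel, and we obtain
\begin{equation*}
\scal{x}{a^*}+\scal{a}{y^*}-\scal{a}{a^*}
\leq f(x)-f(b)+\scal{b}{y^*}
=f(x)+f^*(y^*),
\end{equation*}
where the last equality uses that $y^*\in\partial f(b)$ forces equality in Fenchel--Young, i.e. $f(b)+f^*(y^*)=\scal{b}{y^*}$. The right-hand side $f(x)+f^*(y^*)$ is independent of $(a,a^*)$ and is finite (since $x\in\dom f$ and $y^*\in\dom f^*$, the latter because $\ran\partial f\subseteq\dom f^*$). Taking the supremum over $\gra\partial f$ yields $F_{\partial f}(x,y^*)\leq f(x)+f^*(y^*)<+\infty$, so $(x,y^*)\in\dom F_{\partial f}$. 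Since $x\in\dom\partial f$ and $y^*\in\ran\partial f$ were arbitrary, $\partial f$ is rectangular.

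There is no serious obstacle here; the only point requiring a little care is the bookkeeping with the two Fenchel--Young inequalities and the identification of the bound as $f(x)+f^*(y^*)$. One should also note at the outset that $\partial f$ is monotone with $\gra\partial f\neq\varnothing$ (so that $F_{\partial f}$ is defined via Fact~\ref{f:Fitz}); this is standard convex analysis. If one prefers, the estimate $F_{\partial f}\leq f\oplus f^*$ on all of $X\times X^*$ can be recorded as a clean intermediate inequality, from which rectangularity is immediate because $\dom(f\oplus f^*)=\dom f\times\dom f^*\supseteq\dom\partial f\times\ran\partial f$.
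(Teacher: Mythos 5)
Your proof is correct: the two subdifferential (Fenchel--Young) estimates combine exactly as you say to give $F_{\partial f}(x,y^*)\leq f(x)+f^*(y^*)<\pinf$ for every $x\in\dom\partial f$ and $y^*\in\ran\partial f$, which is rectangularity. The paper states this result as a Fact and cites Brezis--Haraux (and Zeidler for the Banach-space version) without proof, and your argument is precisely the standard one underlying those references, namely the bound $F_{\partial f}\leq f\oplus f^*$ on $X\times X^*$ together with $\dom\partial f\times\ran\partial f\subseteq\dom f\times\dom f^*$.
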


The second fact---in a finite-dimensional setting---is 
due to Censor, Iusem and Zenios (see \cite{censor} and \cite{iusem},
as well as \cite{BWY8} %\cite[Fact~3.1]{BWY8}, 
for an extension to Banach space with a different proof).

\begin{fact}[Censor-Iusem-Zenios]\label{paramono}
Let $f\colon X\to \RX$ be convex, lower semicontinuous, and proper.
Then $\partial f$ is paramonotone.
\end{fact}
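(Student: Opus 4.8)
The plan is to exploit the Fenchel--Young (in)equality, which characterizes membership in $\gra\partial f$. Recall that for $f$ convex, lower semicontinuous and proper one has, for every $(x,x^*)\in X\times X^*$, the inequality $f(x)+f^*(x^*)\geq\scal{x}{x^*}$, with equality if and only if $x^*\in\partial f(x)$. If $\gra\partial f=\varnothing$ there is nothing to prove, so assume $\gra\partial f\neq\varnothing$; the argument will be entirely pointwise.

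Take $(x,x^*)$ and $(y,y^*)$ in $\gra\partial f$ with $\scal{x-y}{x^*-y^*}=0$. First I would rewrite this orthogonality condition as
\[
\scal{x}{x^*}+\scal{y}{y^*}=\scal{x}{y^*}+\scal{y}{x^*}.
\]
Next, using the Fenchel--Young \emph{equalities} $f(x)+f^*(x^*)=\scal{x}{x^*}$ and $f(y)+f^*(y^*)=\scal{y}{y^*}$, the left-hand side equals $f(x)+f^*(x^*)+f(y)+f^*(y^*)$. On the other hand, the Fenchel--Young \emph{inequalities} give $f(x)+f^*(y^*)\geq\scal{x}{y^*}$ and $f(y)+f^*(x^*)\geq\scal{y}{x^*}$, so the right-hand side is at most $f(x)+f^*(y^*)+f(y)+f^*(x^*)$, which is exactly the left-hand side. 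Hence both of these inequalities are forced to be equalities: $f(x)+f^*(y^*)=\scal{x}{y^*}$ and $f(y)+f^*(x^*)=\scal{y}{x^*}$. Invoking the Fenchel--Young characterization once more, this says precisely that $y^*\in\partial f(x)$ and $x^*\in\partial f(y)$, i.e., $\{(x,y^*),(y,x^*)\}\subseteq\gra\partial f$, which is paramonotonicity.

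I do not expect a serious obstacle. The only points needing a little care are that the computation is genuinely pointwise (so no issue beyond $\gra\partial f\neq\varnothing$ arises), and that the ``equality case'' step is just the elementary observation that if $a+b = a' + b'$ with $a\geq a'$ and $b\geq b'$, then $a=a'$ and $b=b'$. All the analytic content is packaged in the standard Fenchel--Young facts, which are valid in an arbitrary real Banach space for $f$ convex, lower semicontinuous and proper.
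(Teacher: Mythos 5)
Your argument is correct: the orthogonality condition rewritten as $\scal{x}{x^*}+\scal{y}{y^*}=\scal{x}{y^*}+\scal{y}{x^*}$, combined with the Fenchel--Young equalities on $\gra\partial f$ and the Fenchel--Young inequalities for the cross pairs, forces equality in the latter and hence $y^*\in\partial f(x)$, $x^*\in\partial f(y)$; the finiteness needed for the ``equality case'' step is automatic since each of $f(x)$, $f(y)$, $f^*(x^*)$, $f^*(y^*)$ is finite on the graph. The paper states this result as a Fact and cites Censor--Iusem--Zenios, Iusem, and the Banach-space extension rather than proving it, so there is no in-paper proof to compare against; your Fenchel--Young argument is the standard one (it is essentially Iusem's original finite-dimensional proof, and, as you note, it carries over verbatim to an arbitrary real Banach space), so it is a perfectly adequate self-contained justification of the Fact.
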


Besides these results for subdifferential operators,
it was known that rectangularity and paramonotonicity coincide
for certain matrices (see \cite[Remark~4.4]{BBW}). Thus, previously,
it was not clear whether these notions are different.

\emph{The aim of this work is to systematically study
rectangular and paramonotone operators.}

Let us summarize our key findings. 

\begin{itemize}
\item Rectangularity and paramonotonicity are \emph{independent}
notions. 
\item For linear relations, \emph{rectangularity implies
paramonotonicity but not vice versa}.
\item For linear relations satisfying certain closure assumptions in
reflexive spaces, 
\emph{rectangularity and paramonotonicity coincide and also hold for
the adjoint}.
\item For displacement mappings of nonexpansive operators in Hilbert
space, \emph{rectangularity and paramonotonicity are automatic}. 
\end{itemize}

The remainder of this paper is organized as follows.
In Section~\ref{s:aux}, we collect auxiliary results for the reader's
convenience and future use. General monotone operators
are the topic of Section~\ref{s:general} where we also present
an example (see Example~\ref{ExaRniP}) 
illustrating that rectangularity does not imply
paramonotonicity. 
In Section~\ref{s:main}, we focus on linear relations.
For this important subclass of monotone operators,
we obtain characterizations and relationships to corresponding
properties of the adjoint. It is true that 
rectangularity implies paramonotonicity in this setting
(see Proposition~\ref{PropRDP:1}).
Our main result (Theorem~\ref{PaFit:1}) presents a pleasing
characterization in the reflexive setting under a mild closedness
assumption. 
The setting of continuous linear monotone operators is considered
in Section~\ref{s:linop} where we present characterizations of
rectangularity and an example of a paramonotone operator that is not
rectangular (Example~\ref{ExNR:1}). 
In the final Section~\ref{s:id-t}, we consider monotone operators
that are displacement mappings of nonexpansive operators in Hilbert
space. 
For this class of operators, rectangularity and paramonotonicity are
automatic. 

For the convenience of the reader, let us recall some (mostly
standard) terminology from convex analysis and the theory of linear
relations. 

Given a subset $C$ of $X$,
$\inte C$ and $\overline{C}$ denote 
the \emph{interior} and \emph{closure} of $C$, respectively. 
For every $x\in
X$, the \emph{normal cone} of $C$ at $x$ is defined by
$N_C(x)= \menge{x^*\in X^*}{\sup_{c\in C}\scal{c-x}{x^*}\leq 0}$, if
$x\in C$; and $N_C(x)=\varnothing$, if $x\notin C$.
Let $f\colon X\to \RXX$. Then
$\dom f= \menge{x\in X}{f(x)<\pinf})$ 
is the (essential) \emph{domain} of $f$, and
$f^*\colon X^*\to\RXX\colon x^*\mapsto
\sup_{x\in X}(\scal{x}{x^*}-f(x))$ is
the \emph{Fenchel conjugate} of $f$. 
We say that $f$ is \emph{proper} 
if $\dom f\neq\varnothing$ and $\minf\notin\ran f$.
Let $f$ be proper.
The \emph{subdifferential operator} of $f$ 
$\partial f\colon X\To X^*\colon
x\mapsto \menge{x^*\in X^*}{(\forall y\in
X)\; \scal{y-x}{x^*} + f(x)\leq f(y)}$.

Let $Z$ be  a real  Banach space with continuous dual $Z^*$, 
let $C \subseteq Z$ and $D\subseteq Z^*$.
We write $C^\perp := \menge{z^*\in Z^*}{(\forall c\in C)\;\; \langle
z^*,c\rangle= 0}$ and $D_{\perp} := D^\perp \cap Z$. 

Now let $A\colon X\To X^*$ be a 
\emph{linear relation}, i.e., $\gra A$ is a linear subspace of $X$.
(See \cite{Cross} for background material on linear relations.)
The \emph{adjoint} of $A$,
written as $A^*$, is defined by
\begin{subequations}
\begin{align}
\gra A^* &=
\big\{(x^{**},x^*)\in X^{**}\times X^*\mid(x^*,-x^{**})\in(\gra A)^{\bot}\big\}\\
&=\big\{(x^{**},x^*)\in X^{**}\times X^*\mid
(\forall(a,a^*)\in \gra A)\;
\langle x^*,a\rangle=\langle a^*,x^{**}\rangle \big\}.
\end{align}
\end{subequations}
Furthermore, $A$ is \emph{symmetric} if $\gra A\subseteq \gra A^*$;
similarly, $A$ is \emph{skew} if $\gra A \subseteq \gra(-A^*)$. 
The \emph{symmetric part} of $A$ is defined by $A_+ = \thalb A + \thalb
A^*$, while the \emph{skew part} of $A$ is $A_{\mathlarger{\circ}} = \thalb A-\thalb A^*$.
We denote by $\Id$ the \emph{identity mapping} on $X$.
Finally, the {closed unit ball} in $X$ is denoted by $B_X=
\menge{x\in X}{\|x\|\leq1}$, and the positive integers by 
$\NN=\{1,2,3,\ldots\}$.

 \section{Auxiliary results}\label{s:aux}

We start with an elementary observation that has turned out to be quite
useful in \cite{PheSim}. 

\begin{fact}
\label{FQ:1}
Let $\alpha$, $\beta$, and $\gamma$ be in $\RR$. 
Then 
\begin{equation}
(\forall t\in\RR)\quad 
\alpha t^2+\beta t+\gamma \geq 0
\end{equation}
if and only if
$\alpha \geq 0$, $\gamma \geq 0$,
and $\beta^2\leq 4\alpha\gamma$.
\end{fact}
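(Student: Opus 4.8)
The plan is to prove both implications by the elementary device of completing the square, treating the degenerate case $\alpha=0$ separately.

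First I would dispose of the "if" direction. Assume $\alpha\geq 0$, $\gamma\geq 0$, and $\beta^2\leq 4\alpha\gamma$. If $\alpha=0$, then $\beta^2\leq 0$ forces $\beta=0$, so the quadratic reduces to the constant $\gamma\geq 0$. If $\alpha>0$, I would write, for every $t\in\RR$,
\begin{equation}
\alpha t^2+\beta t+\gamma = \alpha\Big(t+\tfrac{\beta}{2\alpha}\Big)^2 + \frac{4\alpha\gamma-\beta^2}{4\alpha},
\end{equation}
and observe that both summands are nonnegative by hypothesis, which gives the claim.

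Next I would handle the "only if" direction. Assume $\alpha t^2+\beta t+\gamma\geq 0$ for all $t\in\RR$. Evaluating at $t=0$ yields $\gamma\geq 0$. To see $\alpha\geq 0$, suppose instead $\alpha<0$; dividing the inequality by $t^2$ for $t\neq 0$ and letting $|t|\to\infty$ forces $\alpha\geq 0$, a contradiction (equivalently, the quadratic tends to $-\infty$). Finally, for the discriminant bound: if $\alpha=0$, then $\beta t+\gamma\geq 0$ for all $t$ forces $\beta=0$, hence $\beta^2=0\leq 4\alpha\gamma$; if $\alpha>0$, I would substitute the minimizer $t=-\beta/(2\alpha)$ into the identity displayed above, obtaining $0\leq(4\alpha\gamma-\beta^2)/(4\alpha)$, i.e.\ $\beta^2\leq 4\alpha\gamma$.

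There is no real obstacle here—the only point requiring a little care is the bookkeeping around the degenerate case $\alpha=0$, where one must not divide by $\alpha$ and must instead argue directly that the linear (or constant) residual is nonnegative on all of $\RR$; once that case is isolated, the completing-the-square identity does all the work in both directions.
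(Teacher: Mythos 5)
Your proof is correct, and the case analysis is complete: the degenerate case $\alpha=0$ is handled properly in both directions (forcing $\beta=0$ from $\beta^2\leq 0$ in one direction, and from the unboundedness of a nonzero linear function in the other), while the completing-the-square identity settles the case $\alpha>0$. The paper states this result as an elementary Fact without supplying a proof, so there is nothing to compare against; your argument is exactly the standard one that would be expected here.
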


Next is a result from Convex Analysis that will be needed in the sequel.

\begin{fact}\emph{(See \cite[Proposition~3.3 and Proposition~1.11]{ph}.)}
\label{pheps:1}Let $f:X\rightarrow\RX$ be lower semicontinuous and
convex, with $\intdom f\neq\varnothing$.
Then $f$ is continuous on $\intdom f$ and $\partial f(x)\neq\varnothing$ for every $x\in\intdom f$.
\end{fact}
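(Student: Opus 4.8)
The statement bundles two classical facts; the plan is to establish continuity on $\intdom f$ first and then read off nonemptiness of the subdifferential from it. We may assume $f$ is proper, since the subdifferential assertion is otherwise vacuous, and we fix $x_0\in\intdom f$, so that $\overline{B}(x_0,r)\subseteq\dom f$ for some $r\in\RPP$. The first task is to show that $f$ is bounded above on a neighbourhood of $x_0$. For $n\in\NN$ set $C_n=\menge{x\in X}{f(x)\leq n}$; each $C_n$ is closed by lower semicontinuity, and the sets $C_n\cap\overline{B}(x_0,r)$ form a countable closed cover of the complete metric space $\overline{B}(x_0,r)$. By the Baire category theorem one of them has nonempty interior, so there exist $x_1\in X$, $\rho\in\RPP$, and $M\in\RR$ with $f\leq M$ on $B(x_1,\rho)$ and $\|x_1-x_0\|\leq r$. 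Put $y_0=2x_0-x_1$, which lies in $\overline{B}(x_0,r)\subseteq\dom f$. As $z$ ranges over $B(x_1,\rho)$, the midpoint $w=\thalb z+\thalb y_0=x_0+\thalb(z-x_1)$ ranges over $B(x_0,\rho/2)$, and convexity gives $f(w)\leq\thalb f(z)+\thalb f(y_0)\leq\thalb M+\thalb f(y_0)=:M'$. Hence $f\leq M'$ on $B(x_0,\rho/2)$.

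Next, convexity turns this one-sided bound into continuity. For $x\in B(x_0,\rho/2)$ we have $f(x_0)\leq\thalb f(x)+\thalb f(2x_0-x)\leq\thalb f(x)+\thalb M'$, whence $f\geq 2f(x_0)-M'$ there as well; with two-sided bounds on $B(x_0,\rho/2)$ in hand, the standard three-point convexity estimate yields a Lipschitz bound for $f$ on $B(x_0,\rho/4)$. Thus $f$ is continuous at $x_0$, and since $x_0\in\intdom f$ was arbitrary, $f$ is continuous on $\intdom f$.

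Finally, at the continuity point $x_0$ the epigraph $\epi f$ has nonempty interior (it contains $B(x_0,\rho/4)\times\,]M',+\infty[$) while $(x_0,f(x_0))$ is not an interior point of it; the Hahn--Banach supporting hyperplane theorem therefore furnishes a nonzero $(x^*,\lambda)\in X^*\times\RR$ with $\scal{x}{x^*}+\lambda t\geq\scal{x_0}{x^*}+\lambda f(x_0)$ for every $(x,t)\in\epi f$. Letting $t\to+\infty$ forces $\lambda\geq0$, and $\lambda=0$ is impossible, for it would make $x^*\neq0$ attain its infimum over the neighbourhood $B(x_0,\rho/2)\subseteq\dom f$ at the interior point $x_0$. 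Normalising $\lambda=1$ and specialising to $t=f(x)$ gives $f(x)-f(x_0)\geq\scal{x-x_0}{-x^*}$ for all $x\in X$, that is, $-x^*\in\partial f(x_0)$.

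The main obstacle is the passage from the Baire-category output to a bound near $x_0$: Baire only locates a ball on which $f$ is finite and bounded somewhere inside $\dom f$, and it is the hypothesis $x_0\in\intdom f$ — as opposed to merely $x_0\in\dom f$ — that permits the reflection through $x_0$ used above. Everything afterward is the routine convex-analytic toolkit (the local Lipschitz estimate and Hahn--Banach separation), the only remaining subtlety being the verification that the supporting functional is non-vertical.
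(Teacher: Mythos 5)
Your proof is correct and is essentially the classical argument behind the cited results in Phelps (which is all the paper offers here — the Fact is stated with a citation, not proved): Baire category plus the reflection through the interior point to get a local upper bound, the standard convexity estimates to upgrade this to local Lipschitz continuity, and a supporting hyperplane to the epigraph (with the non-verticality check) to produce a subgradient. No gaps worth flagging; the only cosmetic remark is that properness is automatic here since $f$ maps into $\left]-\infty,+\infty\right]$ and $\operatorname{int}\operatorname{dom} f\neq\varnothing$.
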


We now state a very useful sufficient condition for maximal
monotonicity and continuity.  

\begin{fact}[Phelps-Simons]\emph{(See 
\cite[Corollary 2.6 and Proposition~3.2(h)]{PheSim}.)}\label{F:1}
Let $A\colon X\rightarrow X^*$ be  monotone and linear. Then $A$ is
maximally monotone and continuous.
\end{fact}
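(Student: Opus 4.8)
The plan is to exploit linearity to turn the monotonicity inequality into an infinitesimal identity, which delivers both conclusions at once. The starting point: for any $(z,z^*)\in X\times X^*$ that is monotonically related to $\gra A$ and any $y\in X$, monotonicity gives $\scal{z-y}{z^*-Ay}\ge 0$; we shall use this with $y$ ranging over a segment through $z$ and take a one-sided derivative.

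The main step is the following elementary lemma: \emph{if $(z,z^*)\in X\times X^*$ satisfies $\scal{z-y}{z^*-Ay}\ge 0$ for every $y\in X$, then $z^*=Az$.} Indeed, fix $w\in X$ and $t>0$ and substitute $y=z-tw$; using $A(z-tw)=Az-tAw$ we obtain $t\,\scal{w}{z^*-Az+tAw}\ge 0$, hence $\scal{w}{z^*-Az}+t\,\scal{w}{Aw}\ge 0$. Letting $t\downarrow 0$ gives $\scal{w}{z^*-Az}\ge 0$, and replacing $w$ by $-w$ yields $\scal{w}{z^*-Az}=0$. Since $w\in X$ was arbitrary, $z^*=Az$.

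Maximal monotonicity follows at once: as $\dom A=X$, whenever $(z,z^*)$ is monotonically related to $\gra A$ the hypothesis of the lemma holds, so $(z,z^*)=(z,Az)\in\gra A$; thus $\gra A$ has no proper monotone extension. For continuity we invoke the closed graph theorem (applicable because $X$ and $X^*$ are Banach): suppose $x_n\to x$ in $X$ and $Ax_n\to y^*$ in $X^*$. For each $y\in X$ we have $\scal{x_n-y}{Ax_n-Ay}\ge 0$, and since the duality pairing $\scal{\cdot}{\cdot}\colon X\times X^*\to\RR$ is bounded and bilinear, letting $n\to\infty$ gives $\scal{x-y}{y^*-Ay}\ge 0$. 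Applying the lemma to $(x,y^*)$ yields $y^*=Ax$, so $\gra A$ is closed; hence $A$ is bounded, i.e., continuous.

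The only delicate points are the two limits: the scalar limit $t\downarrow 0$ is immediate, and the limit $n\to\infty$ in the closed-graph argument needs only the joint norm-continuity of the pairing. I do not expect a real obstacle. Conceptually, the noteworthy feature is that linearity lets one bypass the hemicontinuity arguments usually needed to establish maximality of an everywhere-defined monotone operator; and the closed-graph route to continuity is shorter than, and free of the reflexivity-flavoured hypotheses of, the alternative that derives a Cauchy--Schwarz inequality for the positive semidefinite form $x\mapsto\scal{x}{Ax}$ and then appeals to the uniform boundedness principle.
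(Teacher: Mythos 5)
Your proof is correct. The key lemma (an everywhere-defined linear monotone operator absorbs every monotonically related point), the deduction of maximality, and the closed-graph argument for continuity (using norm-continuity of the pairing to pass to the limit in $\scal{x_n-y}{Ax_n-Ay}\ge 0$ and then invoking the lemma to identify the limit as $Ax$) are all sound; note that the paper itself offers no proof of this Fact, citing Phelps--Simons instead, so your argument serves as a legitimate self-contained substitute and is in the same spirit as the cited source, where maximality on a full (or dense) domain and a closed-graph/demiclosedness argument for continuity are the standard route.
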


Most of our results involve linear relations.
The next result holds even without monotonicity. 

\begin{fact}[Cross]\label{Rea:1}
Let $A:X \To X^*$ be a linear relation.
Then the following hold:
\begin{enumerate}
\item \label{Th:28}
$(\forall (x,x^*)\in\gra A)$ 
$Ax=x^* +A0$. 
\item \label{Sia:2b}
$(\forall x\in \dom A)(\forall y\in\dom A^*)$
$\langle A^*y,x\rangle=\langle y,Ax\rangle$ is a singleton.
\item\label{Th:32s} 
$\ker A^*=(\ran A)^{\bot}$ and $\ker\bar{A}=(\ran A^*)_{\bot}$,
 where $\gra \bar{A} = \overline{\gra A}$. 
\end{enumerate}
\end{fact}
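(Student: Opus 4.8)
The plan is to read off all three items from the linearity of the relevant graphs together with the defining identity of the adjoint. Note first that $\gra A^*$ is itself a linear subspace of $X^{**}\times X^*$, being the image of the subspace $(\gra A)^{\bot}$ under the linear map $(x^*,x^{**})\mapsto(-x^{**},x^*)$; hence anything proved for a general linear relation applies to $A^*$ as well. For~(i), fix $(x,x^*)\in\gra A$. If $y^*\in Ax$, then $(0,y^*-x^*)=(x,y^*)-(x,x^*)\in\gra A$, so $y^*-x^*\in A0$; conversely, if $z^*\in A0$, then $(x,x^*+z^*)=(x,x^*)+(0,z^*)\in\gra A$, so $x^*+z^*\in Ax$. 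Thus $Ax=x^*+A0$.

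For~(ii), fix $x\in\dom A$ and $y\in\dom A^*$, and choose $x^*\in Ax$ and $y^*\in A^*y$; the defining identity of the adjoint yields $\scal{x}{y^*}=\scal{x^*}{y}$, so the real content is that neither value depends on the chosen representative. By~(i) applied to $A$, every element of $Ax$ equals $x^*+z^*$ for some $z^*\in A0$, and since $(0,z^*)\in\gra A$ the adjoint identity forces $\scal{z^*}{y}=\scal{0}{y^*}=0$; hence $\scal{y}{Ax}=\{\scal{x^*}{y}\}$ is a singleton. By~(i) applied to $A^*$, every element of $A^*y$ equals $y^*+w^*$ for some $w^*\in A^*0$, and $(0,w^*)\in\gra A^*$ gives $\scal{a}{w^*}=\scal{a^*}{0}=0$ for all $(a,a^*)\in\gra A$, in particular $\scal{x}{w^*}=0$ because $x\in\dom A$; hence $\scal{A^*y}{x}=\{\scal{y^*}{x}\}$ is a singleton. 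The two singletons agree by the adjoint identity, giving $\scal{A^*y}{x}=\scal{y}{Ax}$.

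For~(iii), the first equality is a direct unravelling: $x^{**}\in\ker A^*$ iff $(x^{**},0)\in\gra A^*$ iff $(0,-x^{**})\in(\gra A)^{\bot}$ iff $\scal{a^*}{x^{**}}=0$ for all $a^*\in\ran A$, that is, iff $x^{**}\in(\ran A)^{\bot}$. For the second equality, I would use that annihilators are insensitive to closure, so $(\gra A)^{\bot}=(\overline{\gra A})^{\bot}$, and that a closed subspace is the pre-annihilator of its annihilator, so $\gra\bar A=\overline{\gra A}=\big((\gra A)^{\bot}\big)_{\bot}$. Then $x\in\ker\bar A$ iff $(x,0)\in\big((\gra A)^{\bot}\big)_{\bot}$ iff $\scal{x}{y^*}+\scal{0}{y^{**}}=0$ for every $(y^*,y^{**})\in(\gra A)^{\bot}$. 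Since $(y^*,y^{**})\in(\gra A)^{\bot}$ exactly when $(-y^{**},y^*)\in\gra A^*$, the collection of first coordinates occurring in $(\gra A)^{\bot}$ is precisely $\ran A^*$, so the condition reads $\scal{x}{z^*}=0$ for all $z^*\in\ran A^*$, that is, $x\in(\ran A^*)^{\bot}\cap X=(\ran A^*)_{\bot}$.

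The only step that is not purely formal is this last one: identifying $\gra\bar A$ with the pre-annihilator $\big((\gra A)^{\bot}\big)_{\bot}$ (a Hahn--Banach/bipolar fact about closed subspaces) and recognising that discarding the $X^{**}$-component of $(\gra A)^{\bot}$ returns exactly $\ran A^*$. Items~(i), (ii) and the first half of~(iii) are immediate manipulations of the subspace structure and the adjoint's defining identity.
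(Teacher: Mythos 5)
Your proof is correct, but it is worth noting that the paper does not prove this Fact at all: its ``proof'' consists of citations to Cross's monograph (Propositions I.2.8(a), III.1.2, and III.1.4(a)\&(c)), so you have supplied a self-contained argument where the paper defers to the literature. Your route is the natural one and all steps check out: (i) is pure linearity of $\gra A$; in (ii) you correctly isolate the real issue, namely independence of the chosen representatives, and settle it by observing that $A0$ is annihilated by every $y\in\dom A^*$ (take $(a,a^*)=(0,z^*)$ in the adjoint identity) and that $A^*0\subseteq(\dom A)^\perp$, the latter being legitimate because $\gra A^*$ is itself a linear subspace, as you justify; the first half of (iii) is indeed a direct unravelling of the definition of $\gra A^*$. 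The only genuinely analytic ingredient is the one you flag: the bipolar-type identity $\overline{\gra A}=\big((\gra A)^\perp\big)_\perp$ together with $(\gra A)^\perp=(\overline{\gra A})^\perp$, which is the standard Hahn--Banach consequence for subspaces of a Banach space (here $Z=X\times X^*$ with dual $X^*\times X^{**}$ under the paper's pairing), and your identification of the set of first coordinates of $(\gra A)^\perp$ with $\ran A^*$ via the sign-flip bijection $(y^*,y^{**})\mapsto(-y^{**},y^*)$ is exactly right. What your approach buys is transparency and independence from the reference; what the paper's approach buys is brevity, since these are textbook facts about multivalued linear operators.
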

\begin{proof}
\ref{Th:28}: See \cite[Proposition~I.2.8(a)]{Cross}.
\ref{Sia:2b}: See \cite[Proposition~III.1.2]{Cross}.
\ref{Th:32s}: See \cite[Proposition~III.1.4(a)\&(c)]{Cross}.
\end{proof}

The following result gives a  sufficient condition for maximal
monotonicity. 

\begin{fact}{\rm (See \cite[Theorem~3.1]{Yao2}.)}\label{domain:L1}
Let $A:X\To X^*$ be a maximally monotone linear relation, 
and let $f:X\to\RX$ be convex, lower semicontinuous, and convex with 
$\dom  A\cap\intdom\partial f\neq\varnothing$.
Then $A+\partial f$ is maximally monotone.
\end{fact}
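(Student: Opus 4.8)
The plan is to reduce the maximal monotonicity of $A+\partial f$ to an application of a known sum theorem after verifying the appropriate constraint qualification. First I would observe that both $A$ (a maximally monotone linear relation, by hypothesis) and $\partial f$ (maximally monotone since $f$ is convex, lower semicontinuous, and proper) are maximally monotone operators from $X$ to $X^*$. The natural tool is a Rockafellar-type sum theorem for the sum of two maximally monotone operators under an interior-type qualification; in the reflexive case the classical result applies, and in the general Banach space case one invokes the corresponding theorem available when one of the summands has open domain or, more precisely, when $\dom A \cap \intdom\partial f \neq\varnothing$. Since $\intdom\partial f = \intdom f$ by Fact~\ref{pheps:1} (so $\partial f$ is everywhere defined and locally bounded on that open set, hence in particular of Rockafellar type there), the hypothesis $\dom A\cap\intdom\partial f\neq\varnothing$ is exactly the constraint qualification needed.

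The key steps, in order, are: (i) record that $\partial f$ is maximally monotone and that $\intdom\partial f=\intdom f\neq\varnothing$, using Fact~\ref{pheps:1}; (ii) note that $A$ is maximally monotone by hypothesis, and that a linear relation has $\dom A$ an (affine, in fact linear) subspace, which is a convenient structural fact though not strictly needed; (iii) verify that the pair $(A,\partial f)$ satisfies the constraint qualification of the chosen sum theorem---this is where $\dom A\cap\intdom f\neq\varnothing$ enters, guaranteeing that $\bigcup_{\lambda>0}\lambda(\dom A-\intdom f)$ is all of $X$ (or the relevant absorbing/Attouch--Br{\'e}zis condition holds), which is immediate since $\intdom f$ is open and nonempty and meets $\dom A$; (iv) conclude $A+\partial f$ is maximally monotone by the sum theorem. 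One should also check the trivial point that $\gra(A+\partial f)\neq\varnothing$, which follows since the intersection of the domains is nonempty and $\partial f(x)\neq\varnothing$ on $\intdom f$.

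The main obstacle is purely bibliographic rather than mathematical: one must cite a sum theorem valid in the generality claimed (a general real Banach space $X$, not merely reflexive), with a hypothesis matching $\dom A\cap\intdom\partial f\neq\varnothing$. The cleanest route is to appeal directly to \cite[Theorem~3.1]{Yao2}, which is stated precisely for a maximally monotone linear relation $A$ together with a convex lower semicontinuous proper $f$ under exactly this domain-intersection condition; in that case the ``proof'' is simply the citation, and the only thing to verify is that the hypotheses of the present statement match those of that theorem (in particular that ``convex, lower semicontinuous, and convex'' is a typo for ``convex, lower semicontinuous, and proper,'' which is the standing assumption under which $\partial f$ behaves as needed). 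If instead one wants a self-contained argument, the subtle point is handling the linear-relation summand, whose domain need not be closed; here the fact that $\intdom f$ is open rescues the qualification, since any point of $\dom A\cap\intdom f$ already furnishes the required absorbing cone condition.
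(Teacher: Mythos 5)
Your ultimate move---simply citing \cite[Theorem~3.1]{Yao2} after checking that the hypotheses match (and that ``convex, lower semicontinuous, and convex'' is a typo for ``proper'')---is exactly what the paper does, since the statement appears there as a Fact with no proof beyond that citation. One caution: your fallback sketch via a generic Rockafellar-type sum theorem would not go through here, because in a general (non-reflexive) Banach space the sum theorem under the condition $\dom A\cap\inte\dom\partial f\neq\varnothing$ is not available for arbitrary maximally monotone operators; the entire content of the fact is that Yao established this special case (linear relation plus subdifferential), so the citation is mathematically essential rather than merely bibliographic.
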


For a monotone linear relation $A\colon X \To X^*$ it will be convenient
to define---as in, e.g., \cite{BBW}---the associated quadratic form
\begin{equation}
q_A\colon X\to \RX\colon x\mapsto  \begin{cases} \tfrac{1}{2}\langle x,Ax\rangle,
&\text{if $x\in \dom A$};\\
+\infty,&\text{otherwise}.\end{cases}
\end{equation}

Let us record some useful 
properties of monotone linear relations for future reference.

\begin{fact}
{\rm (See \cite[Proposition~5.2(i)(iii)\&(iv), 
Proposition~5.3 and Proposition~5.4(iii)\&(iv)]{BBWY5}.)}\label{linear}
Let $A\colon X \To X^*$ be a maximally monotone linear relation.
Then the following hold:
\begin{enumerate}
\item\label{sia:2v}  $A0=A^*0=A_+0 = (\dom A)^{\perp}$ is
(weak$^*$) closed.
\item \label{sia:3vi}
If  $\dom A$ is closed, then $X\cap \dom A^*=\dom A$.
\item \label{Nov:s2}$q_A$ is single-valued and convex.
\item \label{f:PheSim:brah:1} $A^*|_X$ is monotone.
\item \label{dms:quad}
If  $\dom A$ is closed,
then $A_+=\partial \overline{q_A}$, where
$\overline{q_A}$ is the lower semicontinuous hull of $q_A$.
\end{enumerate}
\end{fact}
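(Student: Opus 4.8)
The plan is to prove the five items in the order (i), (iii), (ii), (iv), (v), since the last three lean on the first two. Two structural features of a maximally monotone linear relation $A$ will be used repeatedly: $\gra A$ is a linear subspace, so that $(0,0)\in\gra A$ and $(ta,ta^*)\in\gra A$ whenever $(a,a^*)\in\gra A$ and $t\in\RR$; and any point of $X\times X^*$ monotonically related to $\gra A$ already lies in $\gra A$. For (i): if $y^*\in A0$, then for every $(a,a^*)\in\gra A$ monotonicity of $(0,y^*)$ against $(ta,ta^*)$ gives $-t\scal{a}{y^*}+t^2\scal{a}{a^*}\geq0$ for all $t\in\RR$, so---the constant term being $0$---Fact~\ref{FQ:1} forces $\scal{a}{y^*}=0$; hence $A0\subseteq(\dom A)^\perp$. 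Conversely, if $y^*\in(\dom A)^\perp$ then $\scal{0-a}{y^*-a^*}=\scal{a}{a^*}\geq0$ for every $(a,a^*)\in\gra A$ (the inequality is monotonicity of $(a,a^*)$ against $(0,0)$), so $(0,y^*)$ is monotonically related to $\gra A$ and therefore $y^*\in A0$ by maximality. Reading off the definition of the adjoint at the slice $x^{**}=0$ gives $A^*0=(\dom A)^\perp$ directly, and then $A_+0=\thalb A0+\thalb A^*0=(\dom A)^\perp$ because the latter set is a linear subspace; finally $(\dom A)^\perp$ is weak$^*$-closed, being an intersection of kernels of weak$^*$-continuous functionals.

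For (iii): by Fact~\ref{Rea:1}\ref{Th:28} together with (i) one has $Aa=a^*+(\dom A)^\perp$ for $(a,a^*)\in\gra A$, so for $x\in\dom A$ the number $\scal{x}{Ay}$ does not depend on which representative of $Ay$ is chosen. Hence $b\colon\dom A\times\dom A\to\RR\colon(x,y)\mapsto\scal{x}{Ay}$ is a well-defined bilinear form and, in particular, $q_A=\thalb b(\cdot,\cdot)$ is single-valued on the subspace $\dom A$. Since $b(x,x)=\scal{x}{Ax}=2q_A(x)\geq0$ by monotonicity against $(0,0)$, the symmetrization $\thalb\big(b(x,y)+b(y,x)\big)$ is a symmetric, positive semidefinite bilinear form, so $q_A$ is a positive semidefinite quadratic form on $\dom A$, hence convex once extended by $+\infty$.

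The remaining items are the structural ones. For (ii): the inclusion $X\cap\dom A^*\subseteq\overline{\dom A}$ is contained in the single-valuedness clause of Fact~\ref{Rea:1}\ref{Sia:2b} (the pairing of an element of $\dom A^*$ with $A0=(\dom A)^\perp$ must vanish), which, when $\dom A$ is closed, reads $X\cap\dom A^*\subseteq\dom A$; for the reverse inclusion, given $x\in\dom A$, I would consider the linear functional $a\mapsto\scal{x}{Aa}$ on the closed---hence Banach---subspace $\dom A$, which is well defined by (i), show it is bounded by applying the open mapping theorem to the continuous surjection $\pi_1\colon\gra A\to\dom A$ (whose kernel $\{0\}\times A0$ is annihilated by $x$), extend it by Hahn--Banach to some $x^*\in X^*$, and note that $(x,x^*)\in\gra A^*$. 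For (iv), it suffices to show $\scal{z}{z^*}\geq0$ whenever $(z,z^*)\in\gra A^*$ and $z\in X$; Fact~\ref{Rea:1}\ref{Sia:2b} yields $\scal{a}{z^*}=\scal{z}{a^*}$ for every $(a,a^*)\in\gra A$, so if $z\in\dom A$ one chooses $(a,a^*)=(z,z_0^*)$ with $z_0^*\in Az$ and obtains $\scal{z}{z^*}=\scal{z}{z_0^*}=2q_A(z)\geq0$; the passage to a general $z$, which necessarily lies in $\overline{\dom A}$, is handled by approximating $z$ by points of $\dom A$ and invoking the lower semicontinuity of $\overline{q_A}$ from (iii). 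For (v): by (ii) the symmetric part $A_+=\thalb A+\thalb A^*$ is a genuine symmetric monotone linear relation $X\To X^*$ with domain $\dom A$; a computation with Fact~\ref{Rea:1}\ref{Sia:2b} shows that the subgradient inequality for $\overline{q_A}$ holds at each point of $\dom A$ against every element of $A_+$, so $A_+\subseteq\partial\overline{q_A}$, and since $\partial\overline{q_A}$ is monotone while $A_+$ is maximally monotone this inclusion is an equality.

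The step I expect to be the main obstacle is the boundedness (open-mapping) argument underpinning (ii), together with propagating the identity $\scal{z}{z^*}=2q_A(z)$ from $\dom A$ to all of $\overline{\dom A}$ in (iv), and the separate input that $A_+$ is maximally monotone used in (v); once these are secured, identifying $A_+$ with $\partial\overline{q_A}$ is essentially bookkeeping with the lower semicontinuous hull.
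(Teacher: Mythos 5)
This statement is a Fact imported verbatim from \cite{BBWY5}; the paper supplies no proof, only the citation, so the only question is whether your reconstruction actually closes. Items \ref{sia:2v} and \ref{Nov:s2} do: the quadratic argument with Fact~\ref{FQ:1} for $A0\subseteq(\dom A)^\perp$, maximality for the reverse inclusion, reading $A^*0$ off the definition of the adjoint, and the positive semidefinite symmetrized bilinear form for the convexity of $q_A$ are all correct and complete. Item \ref{sia:3vi} is also essentially right: the forward inclusion via the single-valuedness of $\scal{y}{A0}$ and the bipolar identity $((\dom A)^\perp)_\perp=\overline{\dom A}$, and the reverse via the open mapping theorem applied to $\pi_1\colon\gra A\to\dom A$ followed by Hahn--Banach, is the standard route; the one hypothesis you should make explicit is that $\gra A$ is norm-closed (it is, being the graph of a maximally monotone operator), since otherwise $\gra A$ is not a Banach space and the open mapping theorem does not apply.

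The two steps you flag as obstacles are genuine gaps, and for one of them the fix is much simpler than what you propose. For \ref{f:PheSim:brah:1}, the approximation/lower-semicontinuity argument does not close: if $z\in\overline{\dom A}\smallsetminus\dom A$ and $a_n\to z$ with $a_n\in\dom A$, you have no control over $Aa_n$ or $q_A(a_n)$ along the sequence, so nothing relates any $\liminf$ to $\scal{z}{z^*}$. Argue instead by maximality: for $(z,z^*)\in\gra A^*$ with $z\in X$ and any $(a,a^*)\in\gra A$, the adjoint identity $\scal{z}{a^*}=\scal{a}{z^*}$ makes the cross terms cancel, so $\scal{-z-a}{z^*-a^*}=-\scal{z}{z^*}+\scal{a}{a^*}$; if $\scal{z}{z^*}<0$ this is strictly positive for every $(a,a^*)\in\gra A$, hence $(-z,z^*)$ is monotonically related to $\gra A$ and therefore belongs to $\gra A$, and applying the adjoint identity to $(a,a^*)=(-z,z^*)$ yields $-\scal{z}{z^*}=\scal{z}{z^*}$, a contradiction. (This is exactly the device the paper itself uses in the proof of Proposition~\ref{LemmaCP:2}.) For \ref{dms:quad}, your inclusion $A_+\subseteq\partial\overline{q_A}$ is fine---the identity $q_A(y)-q_A(x)-\scal{y-x}{A_+x}=q_A(y-x)\geq0$ does the work---but upgrading it to an equality by invoking the maximal monotonicity of $A_+$ is circular in effect: given that inclusion and Rockafellar's theorem, maximal monotonicity of $A_+$ is \emph{equivalent} to the assertion $A_+=\partial\overline{q_A}$ you are trying to prove, and you offer no independent proof of it. That is where the real content of this item lies (one must either prove $\partial\overline{q_A}\subseteq A_+$ directly or establish maximality of the symmetric monotone linear relation $A_+$ by other means), and it cannot be dismissed as bookkeeping.
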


Concerning kernels and ranges, we have the following result. 

\begin{fact}\emph{(See \cite[Theorem~3.2(i)\&(ii)]{BWY3}.)}\label{KerRa:1}
Suppose that $X$ is reflexive, and let $A:X\To X^*$ be  a
maximally monotone linear relation.
Then $\ker A=\ker A^*$ and $\overline{\ran A}=\overline{\ran A^*}$.
\end{fact}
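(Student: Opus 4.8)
The plan is to reduce both identities to the single equality $\ker A=\ker A^*$, prove \emph{that} by two inclusions, and recover the range statement by passing to orthogonal complements. First I would record that $\gra A$ is norm-closed: if $(x_n,x_n^*)\to(x,x^*)$ in $X\times X^*$ with $(x_n,x_n^*)\in\gra A$, then passing to the limit in \eqref{e:defmono} shows $(x,x^*)$ is monotonically related to $\gra A$, so $(x,x^*)\in\gra A$ by maximality. Hence $\bar A=A$, and Fact~\ref{Rea:1}\ref{Th:32s} gives $\ker A^*=(\ran A)^\perp$ and $\ker A=(\ran A^*)_\perp$. Since $X$ is reflexive, $X^{**}$ is identified with $X$, so $(\ran A^*)_\perp=(\ran A^*)^\perp$, both kernels are norm-closed subspaces of $X$, and the bipolar theorem (for a linear subspace the weak$^*$, weak and norm closures all agree) yields $(\ker A)^\perp=(\ran A^*)^{\perp\perp}=\overline{\ran A^*}$ and $(\ker A^*)^\perp=(\ran A)^{\perp\perp}=\overline{\ran A}$. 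Thus, once $\ker A=\ker A^*$ is in hand, taking $(\cdot)^\perp$ on both sides produces $\overline{\ran A^*}=\overline{\ran A}$.

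For the inclusion $\ker A\subseteq\ker A^*$ I would argue as follows. Let $x\in\ker A$, i.e.\ $(x,0)\in\gra A$, and fix $(a,a^*)\in\gra A$. Since $\gra A$ is a linear subspace, $(ta,ta^*)\in\gra A$ for every $t\in\RR$, so \eqref{e:defmono} applied to $(ta,ta^*)$ and $(x,0)$ reads $\langle a,a^*\rangle\,t^2-\langle x,a^*\rangle\,t\geq0$ for all $t\in\RR$. The constant term being $0$, Fact~\ref{FQ:1} forces $\langle x,a^*\rangle=0$; as $(a,a^*)$ was arbitrary, $x\in(\ran A)^\perp=\ker A^*$. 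For the reverse inclusion, let $x\in\ker A^*=(\ran A)^\perp$, which lies in $X$ by reflexivity. For every $(a,a^*)\in\gra A$ one has $\langle x,a^*\rangle=0$, while $\langle a,a^*\rangle\geq0$ by \eqref{e:defmono} applied to $(a,a^*)$ and $(0,0)\in\gra A$; hence $\langle a-x,a^*-0\rangle=\langle a,a^*\rangle\geq0$, so $(x,0)$ is monotonically related to $\gra A$. Maximal monotonicity of $A$ then forces $(x,0)\in\gra A$, i.e.\ $x\in\ker A$. This gives $\ker A=\ker A^*$, and combining with the first paragraph finishes the proof.

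I expect the principal point of care---rather than a genuine obstacle---to be the bookkeeping around the identification $X^{**}=X$: a priori $\ker A^*$ is a subspace of $X^{**}$, and reflexivity is exactly what lets us view it inside $X$ and compare it with $\ker A$ (reflexivity is used again in the bipolar step). The only inclusion that is not essentially automatic is $\ker A\subseteq\ker A^*$, because, unlike its converse, it cannot call on maximality and must instead extract information from the linearity of $\gra A$ via the scalar quadratic in Fact~\ref{FQ:1}; everything else reduces to routine manipulation with annihilators.
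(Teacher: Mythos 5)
The statement is quoted in the paper as a Fact with an external citation to \cite[Theorem~3.2(i)\&(ii)]{BWY3}; the paper itself contains no proof, so there is nothing internal to compare your argument against. Your proof is correct and self-contained. The closedness of $\gra A$ from maximal monotonicity is argued properly, so Fact~\ref{Rea:1}\ref{Th:32s} indeed gives $\ker A^*=(\ran A)^\perp$ and $\ker A=(\ran A^*)_\perp$; the inclusion $\ker A\subseteq\ker A^*$ via the quadratic in $t$ and Fact~\ref{FQ:1} is sound (it is in effect Lemma~\ref{LemmaCP:} of the paper specialized to the point $(x,0)$, since $\scal{x}{0}=0$); the reverse inclusion correctly exploits $(0,0)\in\gra A$ and maximality, and this is exactly where reflexivity is needed to view $\ker A^*\subseteq X^{**}$ inside $X$; and the passage from $\ker A=\ker A^*$ to $\overline{\ran A}=\overline{\ran A^*}$ by annihilators and the bipolar theorem is legitimate because both ranges are linear subspaces and, in a reflexive space, their weak$^*$ and norm closures coincide. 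Whether this coincides with the route taken in \cite{BWY3} cannot be judged from the present paper, but as a proof of the stated Fact it stands on its own, using only Facts~\ref{FQ:1} and \ref{Rea:1} plus standard duality arguments.
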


The next result provides a formula 
connecting the Fitzpatrick function
of a given linear relation with that of its adjoint.

\begin{lemma}\label{LemmaCP:3a}
Let $A\colon X\To  X^*$ be a linear relation such that 
$\dom A=\dom (A^*|_X)$, and let $z^*\in X^*$.
Then 
\begin{subequations}
\begin{align}
\big(\forall (x,x^*)\in\gra A\label{LCP:3ae1}\big)
\quad F_{A^*|_X}(x,z^*)&=F_A(0,x^*+z^*)\\
\big(\forall (x,y^*)\in\gra (A^*|_X)\label{LCP:3ae2}\big)\quad
F_A(x,z^*)&=F_{A^*|_X}(0, y^*+z^*). 
\end{align}
\end{subequations}
\end{lemma}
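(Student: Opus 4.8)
The plan is to reduce both displayed identities to a single \emph{symmetry relation}: for every $u^*\in X^*$,
\[
F_A(0,u^*)=F_{A^*|_X}(0,u^*).
\]
Granting this, identity~\eqref{LCP:3ae1} follows by a translation argument. Fix $(x,x^*)\in\gra A$. For an arbitrary $(b,b^*)\in\gra(A^*|_X)$, the defining property of the adjoint, applied to the pair $(x,x^*)\in\gra A$, gives $\scal{x}{b^*}=\scal{b}{x^*}$; hence the running term $\scal{x}{b^*}+\scal{b}{z^*}-\scal{b}{b^*}$ in the supremum defining $F_{A^*|_X}(x,z^*)$ equals $\scal{b}{x^*+z^*}-\scal{b}{b^*}$, which is precisely the running term in the supremum defining $F_{A^*|_X}(0,x^*+z^*)$. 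Therefore $F_{A^*|_X}(x,z^*)=F_{A^*|_X}(0,x^*+z^*)$, and the symmetry relation finishes the job. Identity~\eqref{LCP:3ae2} is obtained the same way with the roles of $A$ and $A^*|_X$ swapped: for fixed $(x,y^*)\in\gra(A^*|_X)$ and arbitrary $(a,a^*)\in\gra A$, the adjoint relation yields $\scal{x}{a^*}=\scal{a}{y^*}$, so $F_A(x,z^*)=F_A(0,y^*+z^*)=F_{A^*|_X}(0,y^*+z^*)$.

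To prove the symmetry relation I would first observe that, under the hypothesis $\dom A=\dom(A^*|_X)$, the number $\scal{a}{a^*}$ depends only on $a$ and not on the choice of $a^*\in Aa$. Indeed, any $a\in\dom A$ lies (via the canonical embedding $X\hookrightarrow X^{**}$) in $\dom A\cap\dom A^*$, so Fact~\ref{Rea:1}\ref{Sia:2b} applies: $\scal{a}{Aa}$ is a singleton and coincides with $\scal{a}{A^*a}$. Call this common value $\psi(a)$. Splitting off the vanishing term $\scal{0}{a^*}$, we get $F_A(0,u^*)=\sup_{(a,a^*)\in\gra A}\big(\scal{a}{u^*}-\scal{a}{a^*}\big)=\sup_{a\in\dom A}\big(\scal{a}{u^*}-\psi(a)\big)$, and identically $F_{A^*|_X}(0,u^*)=\sup_{b\in\dom(A^*|_X)}\big(\scal{b}{u^*}-\psi(b)\big)$. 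Since $\dom A=\dom(A^*|_X)$, the two suprema are literally the same, which is the claimed relation.

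The only point requiring care is the well-definedness of $\psi$ together with the identity $\scal{a}{Aa}=\scal{a}{A^*a}$; both are delivered by Fact~\ref{Rea:1}\ref{Sia:2b}, and the hypothesis $\dom A=\dom(A^*|_X)$ is exactly what guarantees that every point appearing in the suprema lies simultaneously in $\dom A$ and in $X\cap\dom A^*$. Everything else is bookkeeping with the definitions of the Fitzpatrick function and of the adjoint of a linear relation; in particular, monotonicity of $A$ is not used. I would also keep an eye on the pairing conventions when restricting $A^*\colon X^{**}\To X^*$ to $X$, to be sure that the terms $\scal{x}{b^*}$, $\scal{b}{x^*}$, and $\scal{b}{b^*}$ are all read through the pairing of $X$ with $X^*$.
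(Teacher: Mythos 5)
Your proof is correct and uses exactly the same ingredients as the paper's: the defining relation of the adjoint applied to the fixed graph point, Fact~\ref{Rea:1}\ref{Sia:2b} to identify $\scal{a}{Aa}$ with $\scal{a}{A^*a}$, and the hypothesis $\dom A=\dom(A^*|_X)$ to equate the index sets of the suprema. The paper simply performs these substitutions in one chain of equalities inside the supremum, whereas you factor the computation into a translation step plus the symmetry relation $F_A(0,\cdot)=F_{A^*|_X}(0,\cdot)$ at the origin; this is only a reorganization, not a different argument.
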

\begin{proof}
 Let $(x,x^*)\in\gra A$. Then
 \begin{subequations}
\begin{align}
F_{A^*|_X}(x,z^*)&=\sup_{(a,b^*)\in \gra (A^*|_X)}
\big(\langle x, b^*\rangle+\langle a,z^*\rangle-\langle a,
b^*\rangle\big)\\
&=\sup_{(a,a^*)\in \gra A}\big(\langle x^*, a\rangle+
\langle a,z^*\rangle-\langle a, a^*\rangle\big)
\quad\text{(by Fact~\ref{Rea:1}\ref{Sia:2b})}\\
&=F_A(0,x^*+z^*). 
\end{align}
\end{subequations}
This establishes \eqref{LCP:3ae1}.
The proof of \eqref{LCP:3ae2} is similar. 
\end{proof}

Finally, ``self-orthogonal'' elements of the graph of $-A$ must
belong to the graph of $A^*$. 

\begin{lemma}
\label{LemmaCP:}
Let $A\colon X\To  X^*$ be a  monotone linear relation and 
suppose that $(a,a^*)\in \gra A$ satisfies
$\scal{a}{a^*}=0$. Then $(a,-a^*)\in\gra A^*$.
\end{lemma}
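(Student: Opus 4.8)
The plan is to unpack the definition of the adjoint, reduce the claim to a single scalar identity for each element of $\gra A$, and then extract that identity from monotonicity by the quadratic trick of Fact~\ref{FQ:1}.

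First I would rewrite the desired conclusion. Using the second (pairing) form of the definition of $A^*$ and identifying $a$ with its canonical image in $X^{**}$, the statement $(a,-a^*)\in\gra A^*$ is equivalent to
\begin{equation*}
(\forall (y,y^*)\in\gra A)\qquad \scal{a}{y^*}+\scal{y}{a^*}=0 .
\end{equation*}
So it suffices to fix an arbitrary $(y,y^*)\in\gra A$ and verify this one equation. Since $\gra A$ is a linear subspace containing $(a,a^*)$, we have $(a+ty,a^*+ty^*)\in\gra A$ for every $t\in\RR$, and monotonicity of $A$ — combined once more with the fact that $\gra A$ is a subspace, so that it is enough to test against $(0,0)\in\gra A$, i.e.\ $q_A\ge 0$ — gives $\scal{a+ty}{a^*+ty^*}\ge 0$ for all $t$. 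Expanding and using the hypothesis $\scal{a}{a^*}=0$ turns this into $\scal{y}{y^*}\,t^2+\big(\scal{a}{y^*}+\scal{y}{a^*}\big)\,t\ge 0$ for every $t\in\RR$.

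Finally I would invoke Fact~\ref{FQ:1} with $\alpha=\scal{y}{y^*}$, $\beta=\scal{a}{y^*}+\scal{y}{a^*}$, and $\gamma=0$: the forced condition $\beta^2\le 4\alpha\gamma=0$ yields $\beta=0$, which is exactly the required identity. As $(y,y^*)\in\gra A$ was arbitrary, the displayed equation holds throughout $\gra A$, so $(a,-a^*)\in\gra A^*$. There is no substantive obstacle here; the only points needing care are matching the bidual/pairing conventions set up in the introduction so that the adjoint condition is transcribed correctly, and observing explicitly that the inequality $q_A\ge 0$ used above is just the subspace reformulation of monotonicity.
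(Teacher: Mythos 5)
Your proposal is correct and is essentially the paper's own argument: the paper also fixes $(b,b^*)\in\gra A$, uses monotonicity together with linearity of $\gra A$ to get the quadratic inequality $t^2\scal{b}{b^*}-t\big(\scal{a}{b^*}+\scal{b}{a^*}\big)\geq 0$ for all $t\in\RR$, and then applies Fact~\ref{FQ:1} (with $\gamma=0$) to force the cross term to vanish, yielding $(a^*,a)\in(\gra A)^\perp$, i.e.\ $(a,-a^*)\in\gra A^*$. Your replacement of $a-tb$ by $a+ty$ is an immaterial sign change, and your unpacking of the adjoint condition matches the paper's definition.
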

\begin{proof}
Take $(b,b^*)\in\gra A$ and set 
\begin{align}
\label{PaAd:6}
\delta := \langle a, b^*\rangle+\langle b,a^*\rangle.
\end{align}
The monotonicity of $A$ and linearity of $\gra A$ yield
\begin{equation}
(\forall t\in\RR)\quad 
t^2\langle b,b^*\rangle-t\delta=\langle a-tb, a^*-tb^*\rangle\geq 0.
\end{equation}
By Fact~\ref{FQ:1},
$\delta^2\leq0$ and thus $\delta=0$.
Therefore, $(a^*,a)\in (\gra A)^{\bot}$ by \eqref{PaAd:6}.
Hence it follows that $(a,-a^*)\in\gra A^*$.
\end{proof}

\section{General results}

\label{s:general}

The results in this section pertain to general operators.
We start with a simple observation concerning the sum of two paramonotone
operators. 

\begin{proposition}\label{PropRDP:2}
Let $A$ and $B$ be paramonotone operators on $X$. 
Then $A+B$ is paramonotone.
\end{proposition}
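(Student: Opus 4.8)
The plan is to verify the paramonotonicity implication directly from the definition. Suppose $(x,x^*) \in \gra(A+B)$ and $(y,y^*) \in \gra(A+B)$ with $\langle x-y, x^*-y^* \rangle = 0$. By definition of the sum, there exist $a^* \in Ax$ and $b^* \in Bx$ with $x^* = a^* + b^*$, and similarly $c^* \in Ay$ and $d^* \in By$ with $y^* = c^* + d^*$. The crucial algebraic step is the splitting
\begin{equation*}
0 = \langle x-y, x^*-y^* \rangle = \langle x-y, a^*-c^* \rangle + \langle x-y, b^*-d^* \rangle,
\end{equation*}
where both summands on the right are nonnegative by the monotonicity of $A$ and of $B$, respectively. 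Hence each summand vanishes: $\langle x-y, a^*-c^* \rangle = 0$ and $\langle x-y, b^*-d^* \rangle = 0$.

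Now I would invoke paramonotonicity of $A$ applied to the pairs $(x,a^*)$ and $(y,c^*)$ in $\gra A$: from $\langle x-y, a^*-c^* \rangle = 0$ we get $\{(x,c^*),(y,a^*)\} \subseteq \gra A$, i.e.\ $c^* \in Ax$ and $a^* \in Ay$. Similarly, paramonotonicity of $B$ applied to $(x,b^*)$ and $(y,d^*)$ gives $d^* \in Bx$ and $b^* \in By$. Adding the appropriate elements: $c^* + d^* \in Ax + Bx = (A+B)x$, that is $y^* \in (A+B)x$, so $(x,y^*) \in \gra(A+B)$; and $a^* + b^* \in Ay + By = (A+B)y$, that is $x^* \in (A+B)y$, so $(y,x^*) \in \gra(A+B)$. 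This establishes $\{(x,y^*),(y,x^*)\} \subseteq \gra(A+B)$, which is exactly the conclusion required.

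There is no real obstacle here; the proof is a short chain of elementary steps, and the only point deserving attention is the nonnegativity-forces-vanishing argument for the two summands, which relies on monotonicity (implied by paramonotonicity) of each operator. For completeness one should also note that $\gra(A+B) \neq \varnothing$ so that the definition of paramonotonicity applies, though if $\dom A \cap \dom B = \varnothing$ the statement is vacuous; I would mention this only in passing. The same argument shows more generally that a finite sum of paramonotone operators is paramonotone, but I would keep the statement as given.
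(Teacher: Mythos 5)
Your proof is correct and follows essentially the same route as the paper's: decompose the graph elements of $A+B$ into their $A$- and $B$-parts, use monotonicity of each summand to force both inner products to vanish, then apply paramonotonicity of $A$ and $B$ separately and add. The only cosmetic difference is that the paper starts from chosen decompositions rather than unpacking $\gra(A+B)$, and in the paper's definition monotonicity is part of being paramonotone rather than a consequence of it.
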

\begin{proof}
Suppose that 
$\{(x_1,a_1^*),(x_2,a_2^*)\}\subseteq\gra A$ and
$\{(x_1,b_1^*),(x_2,b_2^*)\}\subseteq\gra B$ are such that
$\langle  (a_1^*+b_1^*)-(a_2^*+b_2^*), x_1-x_2\rangle=0$.
Then
$\langle  a_1^*-a_2^*, x_1-x_2\rangle+\langle  b_1^*-b_2^*,
x_1-x_2\rangle=0$. 
Combining with the monotonicity of $A$ and $B$, we see that 
\begin{equation}
\langle  a_1^*-a_2^*, x_1-x_2\rangle=0\quad\text{and}
\quad\langle  b_1^*-b_2^*, x_1-x_2\rangle=0.\label{Prasum:2}
\end{equation}
Since $A$ and $B$ are paramonotone and by \eqref{Prasum:2}, we have
$\{(x_1,a_2^*),(x_2,a_1^*)\}\subseteq\gra A$
and $\{(x_1,b_2^*),(x_2,b_1^*)\}\subseteq\gra B$. 
Thus
$\{(x_1,a_2^*+b^*_2),(x_2,a_1^*+b^*_1)\}\subseteq\gra (A+B)$.
Therefore, $A+B$ is paramonotone.
\end{proof}

The following result, which provides a useful sufficient condition for
rectangularity, was first proved by Brezis and Haraux in 
\cite[Example~2]{BrezisHaraux} in a Hilbert space setting. 
In fact, their result holds in general Banach space.
For completeness, we include the proof. 

\begin{proposition}\label{RecT:1}
Let $A:X\To X^*$ be monotone such that $\gra A\neq\varnothing$.
Suppose that $A$ is \emph{strongly coercive} in the sense that 
\begin{equation}
(\forall x\in\dom A)\quad 
\lim_{\rho\to\pinf} \;\;
\inf_{(a,a^*)\in\gra A \text{\rm ~and~} \|a\|\geq\rho}\;\;
\frac{\langle a^*,a-x\rangle}{\|a\|}=\pinf. 
\end{equation} 
Then $\dom A\times X^*\subseteq\dom F_A$;
consequently, $A$ is rectangular.
\end{proposition}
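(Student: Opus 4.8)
The plan is to fix $x \in \dom A$ and an arbitrary $x^* \in X^*$, and to show that the supremum defining $F_A(x,x^*)$ is finite. Recall
\begin{equation*}
F_A(x,x^*) = \sup_{(a,a^*)\in\gra A}\big(\scal{x}{a^*}+\scal{a}{x^*}-\scal{a}{a^*}\big).
\end{equation*}
I would rewrite the summand as $\scal{a}{x^*} - \scal{a^*}{a-x}$, so that
\begin{equation*}
\scal{x}{a^*}+\scal{a}{x^*}-\scal{a}{a^*} = \scal{a}{x^*} - \scal{a^*}{a-x}
\le \|a\|\,\|x^*\| - \scal{a^*}{a-x}.
\end{equation*}
The idea is that the strong coercivity hypothesis forces $\scal{a^*}{a-x}$ to dominate $\|a\|$ once $\|a\|$ is large, so the right-hand side stays bounded above.

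The key step is to make this precise. Applying the strong coercivity hypothesis with this particular $x$ and with the value $\|x^*\|+1$ in place of $+\infty$, there exists $\rho_0 > 0$ such that for all $(a,a^*)\in\gra A$ with $\|a\|\ge\rho_0$ one has $\scal{a^*}{a-x}/\|a\| \ge \|x^*\|+1$, i.e. $\scal{a^*}{a-x}\ge(\|x^*\|+1)\|a\|$. Hence for such $(a,a^*)$,
\begin{equation*}
\scal{a}{x^*} - \scal{a^*}{a-x} \le \|a\|\,\|x^*\| - (\|x^*\|+1)\|a\| = -\|a\| \le 0.
\end{equation*}
On the complementary set $\{(a,a^*)\in\gra A : \|a\| < \rho_0\}$, I need a separate bound. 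Here I fix one reference pair $(b,b^*)\in\gra A$ (which exists since $\gra A\neq\varnothing$, and in fact I may take $b$ with $(x,$ anything$)$ — but more simply just any fixed pair), and use monotonicity: $\scal{a-b}{a^*-b^*}\ge 0$ gives $\scal{a^*}{a}\ge \scal{a^*}{b}+\scal{b^*}{a}-\scal{b^*}{b}$, so
\begin{equation*}
\scal{x}{a^*}+\scal{a}{x^*}-\scal{a}{a^*} \le \scal{x-b}{a^*}+\scal{a}{x^*-b^*}+\scal{b^*}{b}.
\end{equation*}
This is still linear in $a^*$, which is not yet bounded. The honest fix is to combine the two regimes more carefully.

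The main obstacle is precisely that on the "near" region $\|a\|<\rho_0$ the covector $a^*$ can be unbounded, so a crude monotonicity estimate does not immediately close the argument. The cleanest route is to observe that for \emph{every} $(a,a^*)\in\gra A$ we may split according to whether $\|a\|\geq\rho_0$ or not, and for the near region rerun the coercivity bound in a normalized form: actually, reexamining the hypothesis, the quantity $\inf_{\|a\|\ge\rho}\scal{a^*}{a-x}/\|a\|$ tending to $+\infty$ also implies it is bounded below (say by $-M$ for some $M\ge 0$) over \emph{all} $(a,a^*)\in\gra A$ with $a\neq 0$, since a function tending to $+\infty$ is bounded below. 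Thus $\scal{a^*}{a-x}\ge -M\|a\|$ for all $(a,a^*)\in\gra A$ (the case $a=0$ is trivial as the summand is then $\scal{0}{x^*}-\scal{a^*}{-x}=\scal{x}{a^*}$; handle this by noting $(0,a^*)\in\gra A$ with $\scal{0-b}{a^*-b^*}\ge0$ bounds $\scal{a^*}{b}$, hence — if $x=b$ chosen in $\dom A$ — bounds $\scal{a^*}{x}$). Putting these together,
\begin{equation*}
\scal{x}{a^*}+\scal{a}{x^*}-\scal{a}{a^*} \le \|a\|\,\|x^*\| - \scal{a^*}{a-x} \le \|a\|\,\|x^*\| + M\|a\|
\end{equation*}
is still not bounded; so the genuinely correct statement must be that for $\|a\|\geq\rho_0$ we get the bound $-\|a\|\le 0$ as above, while for $\|a\|<\rho_0$ we bound $-\scal{a^*}{a-x}$ from above using monotonicity against a pair whose first coordinate is $x$ itself (legitimate since $x\in\dom A$): picking $(x,x^{**})\in\gra A$, monotonicity gives $\scal{a-x}{a^*-x^{**}}\ge 0$, i.e. $-\scal{a^*}{a-x}\le -\scal{x^{**}}{a-x}\le \|x^{**}\|(\|a\|+\|x\|)$, which on $\|a\|<\rho_0$ is at most $\|x^{**}\|(\rho_0+\|x\|)$. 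Therefore
\begin{equation*}
F_A(x,x^*) \le \max\big\{0,\ \rho_0\|x^*\| + \|x^{**}\|(\rho_0+\|x\|)\big\} < \pinf,
\end{equation*}
so $(x,x^*)\in\dom F_A$. Since $x\in\dom A$ and $x^*\in X^*$ were arbitrary, $\dom A\times X^*\subseteq\dom F_A$, and in particular $\dom A\times\ran A\subseteq\dom F_A$, i.e. $A$ is rectangular. I expect the only subtlety to be this bookkeeping of the two regimes; the choice of a reference pair of the form $(x,\cdot)\in\gra A$ (available because $x\in\dom A$) is what makes the near-region bound work.
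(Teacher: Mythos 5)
Your final argument is correct and is essentially the paper's proof: apply the coercivity hypothesis with threshold $\|x^*\|+1$ to get the bound $-\|a\|\le 0$ on the region $\|a\|\ge\rho_0$, and on the region $\|a\|<\rho_0$ use monotonicity against a pair $(x,\cdot)\in\gra A$ (available since $x\in\dom A$) to bound $-\scal{a^*}{a-x}$ by a constant. The intermediate false starts (the arbitrary reference pair $(b,b^*)$ and the ``bounded below'' digression) are correctly discarded, and the closing estimate and the deduction of rectangularity from $\dom A\times X^*\subseteq\dom F_A$ match the paper.
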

\begin{proof}
Let $(x,y^*)\in\dom A\times X^*$, and set $M:=\|y^*\|+1$. 
Then there exists $\rho>0$ such that
for every $(a,a^*)\in\gra A$ with $\|a\|\geq \rho$, we have
$\langle a^*,a-x\rangle\geq M\|a\|$.
Thus, 
\begin{equation}
\label{sucore:1}
\big(\forall (a,a^*)\in \gra A\big)\quad
\|a\|\geq \rho \;\;\Rightarrow\;\;
\langle a^*,x-a\rangle\leq -M\|a\|.
\end{equation}
Take $x^*\in Ax$. Then, by monotonicity of $A$, 
\begin{equation}
\label{sucore:2}
\big(\forall (a,a^*)\in \gra A\big)\quad
\langle a^*, x-a\rangle\leq\langle  x^*,x-a\rangle.
\end{equation}
Let us now evaluate $F_A(x,y^*)$. 
Fix $(a,a^*)\in\gra A$. 

\noindent
\texttt{Case 1:} $\|a\|<\rho$.\\
Using \eqref{sucore:2}, we estimate 
$\scal{a^*}{x-a}+\scal{a}{y^*} \leq 
\scal{x^*}{x-a}+\scal{a}{y^*} \leq 
\|x^*\|\cdot\|x-a\|+\|a\|\cdot\|y^*\| \leq 
\|x^*\|(\|x\|+\|a\|)+\|a\|\cdot\|y^*\|
\leq \|x^*\|(\|x\|+\rho)+\rho\|y^*\|$. 

\noindent
\texttt{Case 2:} $\|a\|\geq \rho$.\\
Using \eqref{sucore:1}, we have 
$\scal{a^*}{x-a}+\scal{a}{y^*} \leq 
-M\|a\|+\|a\|\cdot\|y^*\| = \|a\|\cdot(\|y^*\|-M) = -\|a\|\leq 0$. 

Altogether, we conclude that 
\begin{equation}
F_A(x,y^*) = \sup_{(a,a^*)\in\gra A}
\scal{a^*}{x-a}+\scal{a}{y^*} \leq 
\|x^*\|\big(\|x\|+\rho\big)+\rho\|y^*\|.
\end{equation}
Hence $\dom A\times X^*\subseteq\dom F_A$ and 
$A$ is therefore rectangular.
\end{proof}

\begin{example}
\label{ex:boundeddom}
Let $A\colon X\To X^*$ be monotone such that $\dom A$ is nonempty and
bounded. Then $A$ is rectangular.
\end{example}
\begin{proof}
This is immediate from Proposition~\ref{RecT:1} because
$\inf\varnothing=\pinf$ and therefore $A$ is strongly coercive.
\end{proof}

We now show how to construct a
maximally monotone operator that is rectangular but not paramonotone.

\begin{proposition}\label{PRcapa}
Let $A:X\To X^*$ be a maximally monotone linear relation,
and let $C$ be a bounded closed convex subset of $X$
such that $0\in\inte C$.  
Then $A+N_C$ is maximally monotone and rectangular.
If $A$ is not paramonotone, then neither is $A+N_C$. 
\end{proposition}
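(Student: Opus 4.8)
The plan is to dispatch the three assertions in turn; maximal monotonicity and rectangularity are quick consequences of the auxiliary results, while the transfer of non-paramonotonicity needs a short direct argument.

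\emph{Maximal monotonicity and rectangularity.} Let $\iota_C\colon X\to\RX$ be the indicator function of $C$ (equal to $0$ on $C$ and to $+\infty$ off $C$); since $C$ is nonempty, closed and convex, $\iota_C$ is proper, lower semicontinuous and convex with $\partial\iota_C=N_C$. I would apply Fact~\ref{domain:L1} to $A$ and $f=\iota_C$: its hypothesis holds because $\dom\partial\iota_C=C$, so $\intdom\partial\iota_C=\inte C\ni0$, while $0\in\dom A$ since $\gra A$ is a linear subspace (equivalently, by Fact~\ref{linear}\ref{sia:2v}, $A0=(\dom A)^{\perp}\neq\varnothing$); hence $0\in\dom A\cap\intdom\partial\iota_C$ and $A+N_C=A+\partial\iota_C$ is maximally monotone. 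For rectangularity, note that $\dom(A+N_C)=\dom A\cap C$ is nonempty (it contains $0$) and bounded (it lies in the bounded set $C$); since $A+N_C$ is monotone, Example~\ref{ex:boundeddom} then gives at once that $A+N_C$ is rectangular.

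\emph{Failure of paramonotonicity persists.} Assume $A$ is not paramonotone. I would first extract a self-orthogonal element of $\gra A$ exhibiting this: there are $(x,x^*),(y,y^*)\in\gra A$ with $\langle x-y,x^*-y^*\rangle=0$ while, say, $(x,y^*)\notin\gra A$ (the case $(y,x^*)\notin\gra A$ is symmetric, after relabelling). Put $a:=y-x$ and $a^*:=y^*-x^*$; then $(a,a^*)\in\gra A$ by linearity of $\gra A$ and $\langle a,a^*\rangle=0$, and by Fact~\ref{Rea:1}\ref{Th:28} we have $Ax=x^*+A0$, so $(x,y^*)\notin\gra A$ means precisely $a^*\notin A0$ — in particular $a\neq0$, for otherwise $(0,a^*)\in\gra A$ would give $a^*\in A0$. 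Now choose $t>0$ small enough that $ta\in\inte C$. Then $(ta,ta^*)\in\gra A$ (linearity) and $0\in N_C(ta)$ (as $ta\in C$), so $(ta,ta^*)\in\gra(A+N_C)$; also $0\in A0$ and $0\in N_C(0)$, so $(0,0)\in\gra(A+N_C)$; and $\langle ta-0,ta^*-0\rangle=t^2\langle a,a^*\rangle=0$. If $A+N_C$ were paramonotone, it would follow that $(0,ta^*)\in\gra(A+N_C)$, i.e.\ $ta^*\in(A+N_C)(0)=A0+N_C(0)=A0$ — here $N_C(0)=\{0\}$ precisely because $0\in\inte C$ — whence $a^*\in A0$, a contradiction. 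Therefore $A+N_C$ is not paramonotone.

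The only step that requires an idea is the last one. Since $A+N_C$ is no longer a linear relation, one cannot simply invoke a linear-relation characterization of paramonotonicity and must argue from the definition; the device is to use $0$ as one of the two test points, where $N_C(0)=\{0\}$, so that the computation collapses onto the ambient linear relation $A$ and the obstruction $a^*\notin A0$ is inherited unchanged.
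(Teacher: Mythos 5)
Your proof is correct and follows essentially the same route as the paper's: maximal monotonicity via Fact~\ref{domain:L1}, rectangularity via boundedness of $\dom A\cap C$ and Example~\ref{ex:boundeddom}, and the transfer of non-paramonotonicity by scaling a self-orthogonal witness $(a,a^*)\in\gra A$ with $a^*\notin A0$ into $\inte C$ and testing against $(0,0)$. The only cosmetic difference is that you extract that witness directly from the definition, whereas the paper simply invokes Lemma~\ref{LemmaCP:1}.
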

\begin{proof}
Set $B = A+N_C$. 
By Fact~\ref{domain:L1} or \cite[Theorem~3.1]{BWY4}, 
$B$ is maximally monotone. 
Since $\dom B = \dom A \cap C \subseteq C$ is bounded, 
we deduce from Example~\ref{ex:boundeddom} that $B$ is rectangular. 

Now assume in addition that $A$ is not paramonotone. 
In view of Lemma~\ref{LemmaCP:1} below,
there exists $(a,a^*)\in\gra A$ such that 
\begin{equation}
\label{Rcapa:1}
\langle a,a^*\rangle=0 \quad\text{but}\quad a^*\notin A0.
\end{equation}
Since $0\in\inte C$, there exists $\delta>0$ such that
$\delta a\in \inte C$. 
The linearity of $\gra A$ yields $(\delta a,\delta a^*)\in\gra A$.
Hence $\{(\delta a,\delta a^*), (0,0)\}\subseteq\gra B$, 
and $\langle \delta a-0,\delta a^*-0\rangle=0$ by \eqref{Rcapa:1}.
However, \eqref{Rcapa:1} implies that $\delta a^*\notin A0=B0$.
Therefore, $B$ is not paramonotone.
\end{proof}

We conclude this section with our first counterexample. 

\begin{example}[rectangular $\not\Rightarrow$ paramonotone in the general
case]
\label{ExaRniP}
Suppose that $X=\RR^2$ and set 
\begin{equation}
A:=\begin{bmatrix}
0& 1\\
-1& 0
\end{bmatrix}. 
\end{equation} 
Then  $A+N_{B_X}$ is
maximally monotone and rectangular, but not paramonotone.
\end{example}
\begin{proof}
Clearly, $0\in\inte B_X$.
The lack of paramonotonicity of $A$ is a consequence of Lemma~\ref{LemmaCP:1}
below. 
The conclusion therefore follows from Proposition~\ref{PRcapa}. 
\end{proof}

\section{Linear relations}\label{s:main}

In this section, we focus exclusively on linear relations.
We start with characterizations of rectangularity and paramonotonicity.
These yield information about corresponding properties of the adjoint.

\begin{lemma}[characterization of rectangularity]
\label{lemR:1}
Let $A:X\To X^*$ be a monotone linear relation.
Then $A$ is rectangular $\Leftrightarrow$
$\dom A\times \{0\}\subseteq\dom F_A$.
\end{lemma}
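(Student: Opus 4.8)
The plan is to prove the characterization by establishing the nontrivial implication $\dom A\times\{0\}\subseteq\dom F_A\ \Rightarrow\ \dom A\times\ran A\subseteq\dom F_A$, since the reverse implication is trivial (if $\dom A\times\ran A\subseteq\dom F_A$ then certainly $\dom A\times\{0\}\subseteq\dom F_A$, as $0\in\ran A$ whenever $\gra A\neq\varnothing$ — indeed $(0,0)\in\gra A$ by linearity of $\gra A$, so $0\in A0\subseteq\ran A$). The key structural fact I would exploit is that for a linear relation $A$, by Fact~\ref{Rea:1}\ref{Th:28} every $y^*\in\ran A$ can be written as $y^* = b^*$ for some $(b,b^*)\in\gra A$, and moreover $\gra A$ is a linear subspace, so translations by graph elements are available.

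First I would fix $(x,y^*)\in\dom A\times\ran A$ and pick $(b,b^*)\in\gra A$ with $b^* = y^*$. The idea is to compute $F_A(x,y^*)$ by substituting $a\mapsto a+b$ in the supremum defining $F_A$, using that $\gra A$ is a subspace so $(a,a^*)\in\gra A\iff (a+b,a^*+b^*)\in\gra A$ — wait, that requires $(b,b^*)\in\gra A$, which we have. Concretely,
\begin{align*}
F_A(x,y^*) &= \sup_{(a,a^*)\in\gra A}\big(\scal{x}{a^*}+\scal{a}{y^*}-\scal{a}{a^*}\big)\\
&= \sup_{(a,a^*)\in\gra A}\big(\scal{x}{a^*+b^*}+\scal{a+b}{y^*}-\scal{a+b}{a^*+b^*}\big),
\end{align*}
and then I would expand the right-hand side, isolating the terms that do not involve $(a,a^*)$ (these form a finite constant depending only on $x$, $b$, $b^*=y^*$) and checking that the remaining $(a,a^*)$-dependent part is exactly $\scal{x}{a^*}+\scal{a}{0}-\scal{a}{a^*}$ plus a linear functional of $a$ alone of the form $-\scal{a}{y^*}$... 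I need to track the cross terms carefully: $\scal{a+b}{y^*} = \scal{a}{y^*}+\scal{b}{y^*}$ and $\scal{a+b}{a^*+b^*} = \scal{a}{a^*}+\scal{a}{b^*}+\scal{b}{a^*}+\scal{b}{b^*}$. Using $b^*=y^*$, the $\scal{a}{y^*}$ term cancels against $-\scal{a}{b^*}=-\scal{a}{y^*}$. So what remains is $\scal{x}{a^*}-\scal{a}{a^*}$ plus the term $-\scal{b}{a^*}$, plus constants. Hmm, there's a leftover $-\scal{b}{a^*}$ which I'd absorb by writing $\scal{x-b}{a^*}-\scal{a}{a^*}$, yielding $F_A(x,y^*) = F_A(x-b,0) + (\text{constant depending on }x,b,y^*)$.

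Hence $F_A(x,y^*)<+\infty$ provided $F_A(x-b,0)<+\infty$, i.e. provided $x-b\in\dom F_A(\cdot,0)$. Since $x\in\dom A$ and $b\in\dom A$ with $\dom A$ a linear subspace, $x-b\in\dom A$, so by hypothesis $(x-b,0)\in\dom F_A$. This gives $(x,y^*)\in\dom F_A$ and completes the proof. The main obstacle I anticipate is purely bookkeeping: getting the shift-of-variable identity exactly right, in particular confirming that the $(a,a^*)$-dependent part of the shifted expression is genuinely $F_A(x-b,0)$ and not $F_A$ evaluated at some other point — one must use $b^*=y^*$ at precisely the right spot to kill the $\scal{a}{y^*}$ term, and one must remember that the constant terms $\scal{x}{y^*}+\scal{b}{y^*}-\scal{b}{y^*}=\scal{x}{y^*}$ are finite and harmless. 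There is no functional-analytic subtlety here beyond Fact~\ref{Rea:1}\ref{Th:28} and the subspace property of $\gra A$; the whole content is that rectangularity for linear relations is a one-parameter-family phenomenon that collapses onto the single fiber over $0\in X^*$.
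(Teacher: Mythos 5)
Your proof is correct, and it takes a different (more computational) route than the paper. The paper handles the nontrivial implication by convexity: writing $(x,y^*)=\tfrac12(2y,2y^*)+\tfrac12(2x-2y,0)$ with $(y,y^*)\in\gra A$, it bounds $F_A(x,y^*)\le\tfrac12 F_A(2y,2y^*)+\tfrac12 F_A(2x-2y,0)$, then uses Fact~\ref{f:Fitz} (that $F_A$ agrees with the pairing on $\gra A$, which is where monotonicity enters) together with $2x-2y\in\dom A$. You instead reindex the supremum defining $F_A$ by the translation $(a,a^*)\mapsto(a+b,a^*+b^*)$ with $(b,b^*)\in\gra A$, $b^*=y^*$ --- legitimate since this is a bijection of the subspace $\gra A$ onto itself --- and your bookkeeping is right: the cross terms cancel exactly as you say, giving the identity $F_A(x,y^*)=F_A(x-b,0)+\scal{x}{y^*}$, and $x-b\in\dom A$ because $\dom A$ is a linear subspace, so the hypothesis finishes the argument. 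Your trivial direction (via $(0,0)\in\gra A$, hence $0\in\ran A$) is also fine. What your approach buys: an exact translation formula rather than a one-sided convexity estimate, and it uses neither the convexity of $F_A$ nor its value on the graph, so monotonicity of $A$ plays no role in your argument; what the paper's approach buys is brevity given the already-cited Fitzpatrick facts, and it avoids the term-by-term expansion that you correctly flag as the only delicate point of your route.
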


\begin{proof}
``$\Rightarrow$": Clear.
``$\Leftarrow$": Let $x\in\dom A$ and $(y,y^*)\in \gra A$. 
Then 
\begin{subequations}
\begin{align}
F_A(x,y^*)&= F_A\big(\tfrac{1}{2}(2y, 2y^*)+\tfrac{1}{2}(2x-2y,0)\big)\\
&\leq\tfrac{1}{2}F_A(2y,2y^*)+\tfrac{1}{2}F_A(2x-2y,0)\quad
\text{(since $F_A$ is convex by Fact~\ref{f:Fitz} )}\\
&=\tfrac{1}{2}\langle 2y,2y^*\rangle+\tfrac{1}{2}F_A(2x-2y,0)
\quad\text{(by Fact~\ref{f:Fitz} and since $\gra A$ is linear)}\\
&<+\infty\quad\text{(since $(2x-2y,0)\in\dom F_A$)}.
\end{align}
\end{subequations}
Thus $(x,y^*)\in\dom F_A$ and hence $A$ is rectangular.
\end{proof}

\begin{corollary}[rectangularity of the adjoint]
\label{LemmaCP:3}
Let $A\colon X\To  X^*$ be a maximally monotone linear relation such
that $\dom A$ is closed.
Then $A$ is rectangular if and only if
 $A^*|_X$ is rectangular.
\end{corollary}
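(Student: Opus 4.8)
The plan is to reduce the statement to the symmetry formula for Fitzpatrick functions established in Lemma~\ref{LemmaCP:3a}, and to use Lemma~\ref{lemR:1} to cut down what has to be checked. First I would observe that since $A$ is a maximally monotone linear relation with $\dom A$ closed, Fact~\ref{linear}\ref{sia:3vi} gives $X\cap\dom A^* = \dom A$, so $\dom(A^*|_X) = \dom A$ and in particular the hypothesis $\dom A = \dom(A^*|_X)$ of Lemma~\ref{LemmaCP:3a} is satisfied; moreover, by Fact~\ref{linear}\ref{f:PheSim:brah:1}, $A^*|_X$ is monotone, and it is clearly a linear relation, so Lemma~\ref{lemR:1} applies to both $A$ and $A^*|_X$. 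Thus it suffices to prove that $\dom A\times\{0\}\subseteq\dom F_A$ if and only if $\dom(A^*|_X)\times\{0\}\subseteq\dom F_{A^*|_X}$, and since the two domains coincide, the two conditions are about the same set of left-hand points.

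Next I would invoke Lemma~\ref{LemmaCP:3a} with $z^* = 0$. Equation~\eqref{LCP:3ae1} then reads, for every $(x,x^*)\in\gra A$, that $F_{A^*|_X}(x,0) = F_A(0,x^*)$, and \eqref{LCP:3ae2} reads, for every $(x,y^*)\in\gra(A^*|_X)$, that $F_A(x,0) = F_{A^*|_X}(0,y^*)$. So I need to relate ``$F_{A^*|_X}(x,0)<+\infty$ for all $x\in\dom A$'' to ``$F_A(x,0)<+\infty$ for all $x\in\dom A$''. The bridge is the point $0$: since $\gra A$ is a linear subspace, $(0,0)\in\gra A$, hence by \eqref{LCP:3ae1} applied at $(x,x^*)=(0,0)$ we get $F_{A^*|_X}(0,0) = F_A(0,0) = 0$ (the last equality because $F_A = \pscal$ on $\gra A$, by Fact~\ref{f:Fitz}). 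More usefully, for a given $x\in\dom A$ choose any $x^*\in Ax$, so $(x,x^*)\in\gra A$; then \eqref{LCP:3ae1} gives $F_{A^*|_X}(x,0) = F_A(0,x^*)$, and I must show $F_A(0,x^*)<+\infty \iff F_A(x,0)<+\infty$ (given that $A$ is rectangular on one side). This is exactly the kind of ``swap of the two slots along a graph element'' that the convexity trick in the proof of Lemma~\ref{lemR:1} handles: writing $(x,0) = \tfrac12(2x,0)$ is not enough, but $(0,x^*)$ and $(x,0)$ can both be compared to $(x,x^*)\in\gra A$ where $F_A = \scal{x}{x^*}<\infty$, via $(0,x^*) = (x,x^*) + (-x,0)$ and $(x,0) = (x,x^*)+(0,-x^*)$ together with convexity and linearity of $\gra A$ (which forces $F_A(2x,2x^*) = \scal{2x}{2x^*}$).

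Concretely, here is the symmetric argument I would write out. Assume $A$ is rectangular, i.e.\ (by Lemma~\ref{lemR:1}) $\dom A\times\{0\}\subseteq\dom F_A$; I claim $A^*|_X$ is rectangular. Fix $x\in\dom(A^*|_X) = \dom A$ and pick $x^*\in Ax$. By \eqref{LCP:3ae1}, $F_{A^*|_X}(x,0) = F_A(0,x^*)$. Now $(0,x^*) = \tfrac12(2x,2x^*) + \tfrac12(-2x,0)$; by convexity of $F_A$ (Fact~\ref{f:Fitz}), $F_A(0,x^*) \le \tfrac12 F_A(2x,2x^*) + \tfrac12 F_A(-2x,0)$. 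Since $\gra A$ is linear, $(2x,2x^*)\in\gra A$, so $F_A(2x,2x^*) = \scal{2x}{2x^*}<+\infty$ by Fact~\ref{f:Fitz}; and $(-2x,0) = (2(-x),0)$ with $-x\in\dom A$, so $F_A(-2x,0)<+\infty$ by the rectangularity hypothesis and Lemma~\ref{lemR:1} (or directly the inclusion $\dom A\times\{0\}\subseteq\dom F_A$, using that $2(-x)\in\dom A$ since $\dom A$ is a subspace). Hence $F_{A^*|_X}(x,0)<+\infty$, proving $\dom(A^*|_X)\times\{0\}\subseteq\dom F_{A^*|_X}$, so $A^*|_X$ is rectangular by Lemma~\ref{lemR:1}. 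The converse direction is identical with the roles of $A$ and $A^*|_X$ exchanged, using \eqref{LCP:3ae2} in place of \eqref{LCP:3ae1} and the fact that $A^*|_X$ is a monotone linear relation with $\dom(A^*|_X)$ closed and $(A^*|_X)^*|_X \supseteq A$ on the relevant domain — here I should double-check that the bidual/reflexivity subtleties do not bite, which is the one place to be careful: Lemma~\ref{LemmaCP:3a} is stated symmetrically in $A$ and $A^*|_X$ only through the hypothesis $\dom A = \dom(A^*|_X)$, so I can apply \eqref{LCP:3ae2} directly without re-deriving an ``adjoint of the adjoint'' statement.

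The main obstacle I anticipate is precisely this last bookkeeping point: making sure that the two inclusions \eqref{LCP:3ae1}--\eqref{LCP:3ae2} from Lemma~\ref{LemmaCP:3a}, together with Fact~\ref{linear}\ref{sia:3vi} (which needs $\dom A$ closed) and Fact~\ref{linear}\ref{f:PheSim:brah:1}, really do give the full equivalence without any hidden appeal to reflexivity or to properties of $(A^*|_X)^*$; everything else is the routine convexity-plus-linearity estimate already rehearsed in the proof of Lemma~\ref{lemR:1}.
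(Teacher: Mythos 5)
Your proposal is correct and follows essentially the same route as the paper: domain equality from Fact~\ref{linear}\ref{sia:3vi}, Lemma~\ref{LemmaCP:3a} with $z^*=0$ to convert $F_{A^*|_X}(x,0)$ into $F_A(0,x^*)$ (and symmetrically via \eqref{LCP:3ae2} for the converse, with no appeal to $(A^*|_X)^*$ or reflexivity), monotonicity of $A^*|_X$ from Fact~\ref{linear}\ref{f:PheSim:brah:1}, and Lemma~\ref{lemR:1} to conclude. The only (harmless) detour is your convexity decomposition of $(0,x^*)$: since $0\in\dom A$ and $x^*\in\ran A$, the definition of rectangularity already gives $F_A(0,x^*)<+\infty$ directly, which is what the paper uses.
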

\begin{proof}
Observe first that Fact~\ref{linear}\ref{sia:3vi} yields
$\dom (A^*|_X)=\dom A$.
``$\Rightarrow$": Let $x\in \dom (A^*|_X)$ and $x^*\in Ax$.  
Applying Lemma~\ref{LemmaCP:3a} and the rectangularity of $A$,
we obtain 
\begin{equation}
F_{A^*|_X}(x,0)
=F_A(0,x^*)<\pinf.
\end{equation}
Combining with Fact~\ref{linear}\ref{f:PheSim:brah:1} and Lemma~\ref{lemR:1},
we deduce that 
 $A^*|_X$ is rectangular.
``$\Leftarrow$'': The proof is similar to the just established
implication and thus omitted. 
\end{proof}

\begin{lemma}[characterization of paramonotonicity]
\label{LemmaCP:1}
Let $A:X\rightrightarrows X^*$ be a monotone linear relation.
Then $A$ is paramonotone if and only if the implication
\begin{equation}
\left.\begin{array}{c}
(a,a^*)\in\gra A\\ \langle a,a^*\rangle=0
\end{array}
\right\} 
\;\; \Rightarrow \;\;
a^*\in A0
\end{equation}
 holds.
\end{lemma}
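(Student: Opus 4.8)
The plan is to exploit the linearity of $\gra A$ to reduce paramonotonicity to a statement about the behaviour of $A$ at \emph{differences} of graph points. The forward direction tests the paramonotonicity condition against the origin, and the reverse direction runs this in reverse, using that $A0$ is a linear subspace to recover both ``cross'' points.

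For the forward implication, I would suppose $A$ is paramonotone and take $(a,a^*)\in\gra A$ with $\langle a,a^*\rangle=0$. Since $\gra A$ is a linear subspace, $(0,0)\in\gra A$, and $\langle a-0,a^*-0\rangle=\langle a,a^*\rangle=0$; applying the definition of paramonotonicity to the pair $(a,a^*)$, $(0,0)$ yields at once $(0,a^*)\in\gra A$, that is, $a^*\in A0$.

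For the reverse implication, I would assume the displayed implication holds and take $(x,x^*),(y,y^*)\in\gra A$ with $\langle x-y,x^*-y^*\rangle=0$. Linearity of $\gra A$ gives $(x-y,x^*-y^*)\in\gra A$, so the hypothesis applied to $(a,a^*):=(x-y,x^*-y^*)$ produces $x^*-y^*\in A0$; and since $A0=\{u^*\mid(0,u^*)\in\gra A\}$ is itself a linear subspace of $X^*$ (again by linearity of $\gra A$), also $y^*-x^*\in A0$. Now either invoke Fact~\ref{Rea:1}\ref{Th:28}: from $(y,y^*)\in\gra A$ we get $Ay=y^*+A0\ni y^*+(x^*-y^*)=x^*$, so $(y,x^*)\in\gra A$, and symmetrically $Ax=x^*+A0\ni x^*+(y^*-x^*)=y^*$, so $(x,y^*)\in\gra A$; or argue directly that $(y,x^*)=(y,y^*)+(0,x^*-y^*)\in\gra A$ and $(x,y^*)=(x,x^*)+(0,y^*-x^*)\in\gra A$. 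Either way $\{(x,y^*),(y,x^*)\}\subseteq\gra A$, so $A$ is paramonotone.

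I do not anticipate any genuine obstacle here; the argument is essentially linear algebra on the graph. The only step meriting a moment's care is the observation that $A0$ is a linear subspace of $X^*$, since that is precisely what upgrades the single membership $x^*-y^*\in A0$ into the two cross-memberships required by the definition of paramonotonicity.
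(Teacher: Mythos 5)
Your proof is correct and follows essentially the same route as the paper: form the difference $(x-y,x^*-y^*)\in\gra A$, apply the hypothesis to get $x^*-y^*\in A0$, and then use the coset structure $Ax=x^*+A0$ (Fact~\ref{Rea:1}\ref{Th:28}, or equivalently the linearity of $A0$) to obtain both cross-memberships; the forward direction by testing against $(0,0)$ is exactly what the paper dismisses as ``clear.''
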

\begin{proof}
``$\Rightarrow$'': Clear.
``$\Leftarrow$'': Let $(a,a^*)$ and $(b,b^*)$ be in $\gra A$ be such that
\begin{equation}
\label{LemmP:1}
\langle a-b,a^*-b^*\rangle=0.
\end{equation}
Since $\gra A$ is linear, it follows that $(a-b,a^*-b^*)\in\gra A$.
The hypothesis now implies that $a^*-b^*\in A0$. 
Thus, by Fact~\ref{Rea:1}\ref{Th:28}, we have $Aa=Ab$. 
Hence $a^*\in Ab$ and $b^*\in Aa$.
Therefore, $A$ is paramonotone.
\end{proof}

\begin{proposition}[paramonotonicity of the adjoint]
\label{LemmaCP:2}
Let $A\colon X\To  X^*$ be a maximally monotone linear relation.
Then $A$ is paramonotone if and only if $A^*|_X$ is paramonotone.
\end{proposition}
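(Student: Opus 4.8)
The plan is to use the characterization of paramonotonicity for linear relations provided by Lemma~\ref{LemmaCP:1}, together with the basic facts on adjoints of maximally monotone linear relations. By Lemma~\ref{LemmaCP:1}, paramonotonicity of $A$ is equivalent to the implication: if $(a,a^*)\in\gra A$ and $\langle a,a^*\rangle=0$, then $a^*\in A0$; and similarly paramonotonicity of $A^*|_X$ is equivalent to: if $(b,b^*)\in\gra (A^*|_X)$ and $\langle b^*,b\rangle=0$, then $b^*\in A^*0$. So the task reduces to showing these two implications are equivalent. Note that $A^*|_X$ is monotone by Fact~\ref{linear}\ref{f:PheSim:brah:1}, so the lemma applies to it; also $A0=(\dom A)^\perp=A^*0$ (intersected with $X^*$) by Fact~\ref{linear}\ref{sia:2v}, which will make the two right-hand sides literally the same set.

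First I would prove ``$\Rightarrow$'': assume $A$ is paramonotone, take $(b,b^*)\in\gra(A^*|_X)$ with $\langle b^*,b\rangle=0$, and aim to show $b^*\in A^*0=(\dom A)^\perp$. The key tool is Lemma~\ref{LemmaCP:}, which says that a self-orthogonal element of the graph of a monotone linear relation lands in the graph of the adjoint: applied to the monotone linear relation $A^*|_X$, the pair $(b,b^*)$ with $\langle b,b^*\rangle=0$ gives $(b,-b^*)\in\gra(A^*|_X)^* = \gra A^{**}\supseteq\gra A$ (identifying $X$ in $X^{**}$), hence $(b,-b^*)\in\gra A$, so $(b,-b^*)$ is a self-orthogonal element of $\gra A$. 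By paramonotonicity of $A$ and Lemma~\ref{LemmaCP:1}, $-b^*\in A0=(\dom A)^\perp$, hence $b^*\in(\dom A)^\perp=A^*0$ by Fact~\ref{linear}\ref{sia:2v}. That yields paramonotonicity of $A^*|_X$ via Lemma~\ref{LemmaCP:1}.

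For ``$\Leftarrow$'' I would argue symmetrically. Assume $A^*|_X$ is paramonotone, take $(a,a^*)\in\gra A$ with $\langle a,a^*\rangle=0$; by Lemma~\ref{LemmaCP:}, $(a,-a^*)\in\gra A^*$, and since $a\in\dom A\subseteq X$ this gives $(a,-a^*)\in\gra(A^*|_X)$ with $\langle -a^*,a\rangle=0$. Paramonotonicity of $A^*|_X$ (via Lemma~\ref{LemmaCP:1}) forces $-a^*\in A^*0=(\dom A)^\perp$, so $a^*\in(\dom A)^\perp=A0$ by Fact~\ref{linear}\ref{sia:2v}. Then Lemma~\ref{LemmaCP:1} gives paramonotonicity of $A$.

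The main obstacle I anticipate is bookkeeping around the bidual: one must be careful that $A^{**}$ restricted to $X$ contains (in fact equals, on $X$, for the relevant purposes) $A$, so that the conclusion $(b,-b^*)\in\gra(A^*|_X)^*$ can legitimately be read as $(b,-b^*)\in\gra A$. Invoking Lemma~\ref{LemmaCP:} with the adjoint as the ``$A$'' of that lemma is the clean way around this, since its conclusion is stated as membership in $\gra(\cdot)^*$; but if that double-adjoint identification feels delicate, an alternative is to bypass Lemma~\ref{LemmaCP:} entirely and instead observe directly from the defining relation $\langle a^*,x\rangle=\langle x^*,a\rangle$ for $(a,a^*)\in\gra A$, $(x,x^*)\in\gra(A^*|_X)$ that a self-orthogonal pair in $\gra(A^*|_X)$ pairs to zero against all of $\gra A$ — essentially rerunning the quadratic-form argument of Lemma~\ref{LemmaCP:} in this concrete setting. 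Apart from that, the proof is a short, essentially symmetric, two-line-each argument.
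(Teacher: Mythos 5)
Your ``$\Leftarrow$'' direction is exactly the paper's argument and is fine: Lemma~\ref{LemmaCP:} puts $(a,-a^*)$ into $\gra(A^*|_X)$, paramonotonicity of $A^*|_X$ together with Lemma~\ref{LemmaCP:1} gives $-a^*\in (A^*|_X)0$, and Fact~\ref{linear}\ref{sia:2v} (the common value $(\dom A)^\perp$ is a subspace) converts this into $a^*\in A0$. The problem is the ``$\Rightarrow$'' direction. There you apply Lemma~\ref{LemmaCP:} to $A^*|_X$ and obtain $(b,-b^*)\in\gra\big((A^*|_X)^*\big)$, and then pass to $(b,-b^*)\in\gra A$ via the asserted chain $\gra\big((A^*|_X)^*\big)=\gra A^{**}\supseteq\gra A$. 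Even granting the unproved identification with $A^{**}$, the inclusion you quote points the wrong way: membership in a set \emph{containing} $\gra A$ does not yield membership in $\gra A$. What you actually need is $\gra\big((A^*|_X)^*\big)\cap(X\times X^*)\subseteq\gra A$ (equivalently, a statement of the form ``$A^{**}$ agrees with $A$ on $X$''), and nothing in the paper's toolkit provides this; it is false for non-maximal monotone linear relations and, in a general (nonreflexive) Banach space, proving it for maximally monotone $A$ is essentially the same work as the step you are trying to skip. Your proposed fallback does not close the gap either: showing that a self-orthogonal $(b,b^*)\in\gra(A^*|_X)$ ``pairs to zero against all of $\gra A$'' is just the defining property of $\gra A^*$, i.e.\ exactly the hypothesis you already have, and by itself it does not give $b^*\in A0=(\dom A)^\perp$; the quadratic-form argument of Lemma~\ref{LemmaCP:} only works \emph{inside} one monotone graph and returns you to $\gra\big((A^*|_X)^*\big)$.

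The missing idea is the paper's: for $(b,b^*)\in\gra(A^*|_X)$ with $\scal{b}{b^*}=0$, verify directly that $(-b,b^*)$ is \emph{monotonically related} to $\gra A$ --- for any $(a,a^*)\in\gra A$ one has $\scal{-b-a}{b^*-a^*}=-\scal{b}{b^*}+\big(\scal{b}{a^*}-\scal{a}{b^*}\big)+\scal{a}{a^*}=\scal{a}{a^*}\geq 0$, using the adjoint identity and $(0,0)\in\gra A$ --- and then invoke the \emph{maximal} monotonicity of $A$ to conclude $(-b,b^*)\in\gra A$. This is precisely where the maximality hypothesis enters, and it replaces the double-adjoint bookkeeping you were worried about. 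From there, paramonotonicity of $A$, Lemma~\ref{LemmaCP:1}, and Fact~\ref{linear}\ref{sia:2v} finish as you intended. So the statement is provable along your general lines, but the step transferring the self-orthogonal pair from $\gra(A^*|_X)$ into $\gra A$ must be done by the maximality argument, not by the inclusion you wrote.
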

\begin{proof}
We start by noting that $A^*|_X$ is monotone by 
Fact~\ref{linear}\ref{f:PheSim:brah:1}. 

``$\Rightarrow$'': 
 Let $(a,a^*)\in\gra (A^*|_X)$ be such that
\begin{align}
\langle a,a^*\rangle=0.\label{PaAd:1}
\end{align}
One verifies that $(-a,a^*)$ is monotonically related to $\gra A$.
The maximal monotonicity of $A$ yields $(-a,a^*)\in\gra A$. 
Hence, by \eqref{PaAd:1} and since $A$ is paramonotone, $a^*\in A0$. 
Thus, by Fact~\ref{linear}\ref{sia:2v}, $a^*\in A^*0=(A^*|_X) 0$.
It now follows from Lemma~\ref{LemmaCP:1} that $A^*|_X$ is paramonotone.

``$\Leftarrow$'':  
Let $(a,a^*)\in\gra A$ be such that
$\langle a,a^*\rangle=0$.
By Lemma~\ref{LemmaCP:}, $(a,-a^*)\in\gra (A^*|_X)$.
Since $A^*|_X$ is paramonotone and  using Fact~\ref{linear}\ref{sia:2v},
we deduce that 
$a^*\in (A^*|_X) 0=A^*0=A0$.
Lemma~\ref{LemmaCP:1} now implies that $A$ is paramonotone.
\end{proof}

The next result shows that rectangularity is a sufficient condition for 
paramonotonicity in the linear case investigated in this section.

\begin{proposition}[rectangular $\Rightarrow$ paramonotone in the linear
case]
\label{PropRDP:1}
Let $A:X\To X^*$ be a maximally monotone linear relation such that
$A$ is rectangular. Then $A$ is paramonotone.
\end{proposition}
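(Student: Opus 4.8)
By Lemma~\ref{LemmaCP:1}, to show that $A$ is paramonotone it suffices to prove the implication: if $(a,a^*)\in\gra A$ and $\langle a,a^*\rangle=0$, then $a^*\in A0$. So fix such a pair $(a,a^*)$ with $\langle a,a^*\rangle=0$. The plan is to use rectangularity — concretely, the characterization in Lemma~\ref{lemR:1} (so $\dom A\times\{0\}\subseteq\dom F_A$) — to control $F_A(a,0)$ and then push this back to membership in $A0$ via the Fitzpatrick inequality of Fact~\ref{f:Fitz}.

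First I would compute $F_A(a,0)$ directly from the definition: $F_A(a,0)=\sup_{(b,b^*)\in\gra A}\big(\langle a,b^*\rangle+\langle b,0\rangle-\langle b,b^*\rangle\big)=\sup_{(b,b^*)\in\gra A}\big(\langle a,b^*\rangle-\langle b,b^*\rangle\big)$. Since $a\in\dom A$, rectangularity (Lemma~\ref{lemR:1}) gives $(a,0)\in\dom F_A$, i.e.\ this supremum is finite; call it $M<+\infty$. Now I would exploit the linearity of $\gra A$: for any $(b,b^*)\in\gra A$ and any scalar $t\in\RR$, the pair $(tb,tb^*)$ lies in $\gra A$, so $\langle a,tb^*\rangle-\langle tb,tb^*\rangle=t\langle a,b^*\rangle-t^2\langle b,b^*\rangle\leq M$ for all $t$. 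By Fact~\ref{FQ:1} applied to the quadratic $-\langle b,b^*\rangle t^2+\langle a,b^*\rangle t-M\leq 0$ (rewritten with a sign flip), the coefficient condition forces $\langle a,b^*\rangle^2\leq 4M\langle b,b^*\rangle$ — but actually the cleaner route is: since this quadratic in $t$ is bounded above, either $\langle b,b^*\rangle>0$, or $\langle b,b^*\rangle=0$ and $\langle a,b^*\rangle=0$. Hmm — I want to get $\langle a,b^*\rangle=0$ for all $(b,b^*)\in\gra A$, which together with $a^*\in Aa$ and $\langle a,a^*\rangle=0$ would give via Fact~\ref{Rea:1}\ref{Th:28} that $\langle a,c^*\rangle=0$ for all $c^*\in A0$, pointing toward $(a,-a^*)\in\gra A^*$ and hence... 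Let me reconsider.

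The more robust approach: apply Lemma~\ref{LemmaCP:} to $(a,a^*)\in\gra A$ with $\langle a,a^*\rangle=0$ to get $(a,-a^*)\in\gra A^*$, hence $\langle a^*,b\rangle=\langle b^*,a\rangle$ for every $(b,b^*)\in\gra A$ by definition of the adjoint (identifying $a\in X$ with its image in $X^{**}$). Then $F_A(a,0)=\sup_{(b,b^*)\in\gra A}\big(\langle a,b^*\rangle-\langle b,b^*\rangle\big)=\sup_{(b,b^*)\in\gra A}\big(\langle a^*,b\rangle-\langle b,b^*\rangle\big)$. By the scaling trick with $(tb,tb^*)$ and Fact~\ref{FQ:1}, finiteness of this supremum (from rectangularity) forces $\langle a^*,b\rangle^2\leq 4M\langle b,b^*\rangle$ for all $(b,b^*)\in\gra A$; in particular $\langle a^*,b\rangle=0$ whenever $\langle b,b^*\rangle=0$. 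Now consider the Fitzpatrick function again but evaluated so as to recognize $(0,a^*)$: I would instead try to show $(0,a^*)\in\gra A$ by checking $F_A(0,a^*)=\langle 0,a^*\rangle=0$ and invoking Fact~\ref{f:Fitz} (equality $F_A=\langle\cdot,\cdot\rangle$ holds precisely on $\gra A$ when $A$ is maximally monotone). We have $F_A(0,a^*)=\sup_{(b,b^*)\in\gra A}\big(\langle 0,b^*\rangle+\langle b,a^*\rangle-\langle b,b^*\rangle\big)=\sup_{(b,b^*)\in\gra A}\big(\langle a^*,b\rangle-\langle b,b^*\rangle\big)$, which is exactly the finite quantity $M$ above. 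Since $F_A\geq\langle\cdot,\cdot\rangle$ always and $\langle 0,a^*\rangle=0$, I need the reverse, $F_A(0,a^*)\leq 0$: from $\langle a^*,b\rangle^2\leq 4M\langle b,b^*\rangle$, the quadratic $t\mapsto\langle a^*,tb\rangle-\langle tb,tb^*\rangle$ has nonpositive maximum over $t$ only if... this gives $\langle a^*,b\rangle-\langle b,b^*\rangle\leq \langle a^*,b\rangle^2/(4\langle b,b^*\rangle)\cdot(\text{something})$ — not obviously $\leq 0$. So the genuinely delicate point is extracting $F_A(0,a^*)\le 0$ rather than merely $<+\infty$; I expect to need the scaling $t\mapsto\infty$ more carefully, namely that $\sup_t\big(t\langle a^*,b\rangle-t^2\langle b,b^*\rangle\big)$ being $\leq M$ for the FIXED finite $M$ and ALL $t$, combined over all $(b,b^*)$, is what's available, and that sup equals $+\infty$ unless $\langle a^*,b\rangle=0$ when $\langle b,b^*\rangle=0$, else equals $\langle a^*,b\rangle^2/(4\langle b,b^*\rangle)$.

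The main obstacle, then, is the final bookkeeping: showing $F_A(0,a^*)\leq 0$. I would handle it by splitting $\gra A$ into the pairs with $\langle b,b^*\rangle=0$ (where $\langle a^*,b\rangle=0$ as shown, so the term $\langle a^*,b\rangle-\langle b,b^*\rangle$ vanishes) and those with $\langle b,b^*\rangle>0$ (where I'd replace $b$ by $sb$ and let the parameter run to force $\langle a^*,b\rangle=0$ there too, using that $F_A(sa^{-1}\cdot,\cdot)$ — more precisely that the bound must hold uniformly). Concretely: for $(b,b^*)$ with $\langle b,b^*\rangle>0$, the pairs $(sb,sb^*)\in\gra A$ give $\langle a^*,sb\rangle-s^2\langle b,b^*\rangle=s\langle a^*,b\rangle - s^2\langle b,b^*\rangle\le F_A(0,a^*)$ for all $s$; if $F_A(0,a^*)$ were to be shown finite (it is, $=M$) this is consistent for all $s$ only because the quadratic in $s$ is bounded, which it is. But to get $F_A(0,a^*)\le 0$ I instead use: $F_A(0,a^*)=\sup_{(b,b^*)}\sup_{s}\big(s\langle a^*,b\rangle-s^2\langle b,b^*\rangle\big)=\sup_{(b,b^*),\,\langle b,b^*\rangle>0}\frac{\langle a^*,b\rangle^2}{4\langle b,b^*\rangle}$, and separately $\sup$ over $\langle b,b^*\rangle=0$ pairs is $0$ — so I must show $\langle a^*,b\rangle=0$ for ALL $(b,b^*)\in\gra A$, not just the isotropic ones. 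This follows because $(a,-a^*)\in\gra A^*$ forces $\langle a^*,b\rangle=\langle b^*,a\rangle$, and symmetrically one can run Lemma~\ref{LemmaCP:} / the Fitzpatrick machinery with the roles arranged so that $\langle b^*,a\rangle$ is controlled; alternatively, and most cleanly, I'd argue that $q_A$ (from Fact~\ref{linear}\ref{Nov:s2}, single-valued and convex on $\dom A$) has $a$ as a minimizer, since $q_A(a)=\tfrac12\langle a,a^*\rangle=0\le q_A(b)$ for all $b\in\dom A$ by monotonicity, whence $0\in\partial q_A(a)$, which after unwinding gives $\langle b,a^*\rangle=0$... and then $\langle a^*,b\rangle=\langle b^*,a\rangle$ plus convexity of $q_A$ delivers $\langle a^*,b\rangle=0$. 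With $\langle a^*,b\rangle=0$ for all $(b,b^*)\in\gra A$ in hand, $F_A(0,a^*)=\sup_{(b,b^*)}\big(0-\langle b,b^*\rangle\big)\le 0$ (by monotonicity $\langle b,b^*\rangle\ge 0$), so $F_A(0,a^*)=0=\langle 0,a^*\rangle$, and Fact~\ref{f:Fitz} gives $(0,a^*)\in\gra A$, i.e.\ $a^*\in A0$. Lemma~\ref{LemmaCP:1} then concludes that $A$ is paramonotone.
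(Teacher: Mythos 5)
Your overall frame is fine: the reduction via Lemma~\ref{LemmaCP:1}, and the intended finish (once $\langle a^*,b\rangle=0$ for all $b\in\dom A$ is known, conclude $a^*\in A0$ either through $F_A(0,a^*)=0$ and Fact~\ref{f:Fitz}, or directly from Fact~\ref{linear}\ref{sia:2v}). The genuine gap is the step that is supposed to deliver $\langle a^*,b\rangle=0$ for \emph{every} $(b,b^*)\in\gra A$. Rectangularity enters your argument only through the finiteness of $M=F_A(a,0)$, and scaling the \emph{generic} pair $(tb,tb^*)$ inside that supremum yields only $\langle a^*,b\rangle^2\le 4M\langle b,b^*\rangle$, which says nothing when $\langle b,b^*\rangle>0$ --- as you notice. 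Your fallback, that $a$ minimizes $q_A$, hence $0\in\partial q_A(a)$, ``which after unwinding gives $\langle b,a^*\rangle=0$'', does not work: unwinding along the line $t\mapsto q_A(a+tb)=\thalb\big(t(\langle a,b^*\rangle+\langle b,a^*\rangle)+t^2\langle b,b^*\rangle\big)\ge 0$ and applying Fact~\ref{FQ:1} returns exactly $\langle a,b^*\rangle+\langle b,a^*\rangle=0$, i.e., the content of Lemma~\ref{LemmaCP:} again, and nothing more. Indeed, no argument that does not re-invoke rectangularity at this stage can close the gap: for the rotator $A=\left[\begin{smallmatrix}0&-1\\1&0\end{smallmatrix}\right]$ on $\RR^2$, $a=(1,0)$, $a^*=Aa$, all the ingredients you rely on hold ($\langle a,a^*\rangle=0$, $(a,-a^*)\in\gra A^*$, $q_A\equiv 0$, so $0\in\partial q_A(a)$), yet $\langle b,a^*\rangle\neq 0$ for $b=(0,1)$. (A minor symptom of the unfinished bookkeeping: $(a,-a^*)\in\gra A^*$ gives $\langle b^*,a\rangle=-\langle a^*,b\rangle$, not $\langle b^*,a\rangle=\langle a^*,b\rangle$; this sign slip happens to be harmless because $\gra A$ is symmetric under $(b,b^*)\mapsto(-b,-b^*)$.)

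The repair is to aim rectangularity at the other argument of $F_A$, which is what the paper does: fix an arbitrary $b\in\dom A$, note $F_A(b,0)<\pinf$ (Lemma~\ref{lemR:1}, or rectangularity directly since $0\in\ran A$), and test the supremum defining $F_A(b,0)$ with the scaled pairs $(ta,ta^*)\in\gra A$, whose quadratic term vanishes because $\langle ta,ta^*\rangle=t^2\langle a,a^*\rangle=0$. This gives $\sup_{t\in\RR}\,t\langle b,a^*\rangle\le F_A(b,0)<\pinf$, hence $\langle b,a^*\rangle=0$; since $b\in\dom A$ was arbitrary, $a^*\in(\dom A)^\perp=A0$ by Fact~\ref{linear}\ref{sia:2v}, and Lemma~\ref{LemmaCP:1} concludes. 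The essential point you are missing is that the pair being scaled must be the isotropic one $(a,a^*)$, so the $t^2$ term disappears and finiteness forces the linear term to vanish; scaling the generic pair $(b,b^*)$, as in your draft, can never produce this. With that substitution your argument becomes the paper's proof (and your Fitzpatrick-equality ending, while correct, is then unnecessary).
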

\begin{proof}
Let $(a,a^*)\in\gra A$ be such that
$\langle a,a^*\rangle=0$.
Take $b\in\dom A$. Then, since $A$ is linear and rectangular, 
\begin{equation}
\pinf> F_A(b,0)\geq\sup_{t\in\RR}
\big(\langle b, ta^*\rangle-\langle ta,ta^*\rangle\big)
=\sup_{t\in\RR}
\langle b, ta^*\rangle.
\end{equation}
Hence $a^*\in (\dom A)^\perp$. 
In view of Fact~\ref{linear}\ref{sia:2v}, $a^*\in A0$. 
Therefore, by Lemma~\ref{LemmaCP:1}, $A$ is paramonotone.
\end{proof}

\begin{remark} 
Some comments regarding Proposition~\ref{PropRDP:1} are in order.
\begin{enumerate}
\item 
The linearity assumption on $A$ in Proposition~\ref{PropRDP:1} is 
not superfluous, see 
Example~\ref{ExaRniP} above. 
\item The converse implication in Proposition~\ref{PropRDP:1} fails even
when $A$ is additionally assumed to be single-valued and continuous; 
see Example~\ref{ExNR:1} below. 
\end{enumerate}
\end{remark}

Proposition~\ref{PropRDP:1} raises the question when paramonotonicity
implies rectangularity. Our main result which we state next shows
that this implication holds under relatively 
mild assumptions.

\begin{theorem}[main result] \label{PaFit:1}
Let $A: X\To  X^*$ be a maximally monotone linear relation.
Suppose that $X$ is reflexive, 
and that $\dom A$ and $\ran A_+$ are closed.
Then the following are equivalent: 
\begin{enumerate}
\item\label{RT:1} $A$ is rectangular.

\item \label{RT:2} $\overline{\ran A}=\ran  A_+$.

\item \label{RT:3} $\ker A_+=\ker A$.
\item\label{RT:4} $A$ is paramonotone.
\item\label{RT:5} $A^*$ is paramonotone.
\item\label{RT:6} $A^*$ is rectangular.
\end{enumerate}
\end{theorem}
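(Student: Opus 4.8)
The plan is to prove the cyclic chain of implications
\ref{RT:1}$\Rightarrow$\ref{RT:4}$\Rightarrow$\ref{RT:3}$\Rightarrow$\ref{RT:2}$\Rightarrow$\ref{RT:1}, and then to obtain the adjoint statements \ref{RT:5} and \ref{RT:6} by applying the already-established equivalences to $A^*|_X$, using Fact~\ref{linear} and the symmetry built into conditions \ref{RT:2} and \ref{RT:3}. First, \ref{RT:1}$\Rightarrow$\ref{RT:4} is nothing but Proposition~\ref{PropRDP:1}, which requires no extra hypotheses. The implication \ref{RT:4}$\Rightarrow$\ref{RT:3} is the first substantive step: one inclusion $\ker A\subseteq\ker A_+$ is automatic from monotonicity (if $0\in Aa$ then $\langle a,Aa\rangle\ni 0$, and the symmetric part inherits this), while for $\ker A_+\subseteq\ker A$ I would take $a\in\ker A_+$, so $q_A(a)=\thalb\langle a,Aa\rangle=0$ for any selection, pick $a^*\in Aa$ with $\langle a,a^*\rangle=0$ (this is exactly $q_A(a)=0$ since $q_A$ is single-valued by Fact~\ref{linear}\ref{Nov:s2}), and invoke the characterization of paramonotonicity in Lemma~\ref{LemmaCP:1} to conclude $a^*\in A0$, i.e. $0\in Aa$.

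Next, \ref{RT:3}$\Rightarrow$\ref{RT:2}. Here is where reflexivity and the closedness hypotheses enter. Since $\dom A$ is closed, Fact~\ref{linear}\ref{dms:quad} gives $A_+=\partial\overline{q_A}$, and because $\dom A$ is closed $\overline{q_A}=q_A$ is itself lower semicontinuous, so $A_+$ is a maximally monotone linear relation that is moreover symmetric; thus $\ran A_+=\overline{\ran A_+}$ (closed by hypothesis) and, using Fact~\ref{KerRa:1} for the maximally monotone linear relation $A_+$ in the reflexive space, $\ker A_+=\ker (A_+)^*$, which is consistent with $A_+$ symmetric. The plan is then to use the orthogonality identities of Fact~\ref{Rea:1}\ref{Th:32s}: for the closed-range maximally monotone operator $A_+$ one has $\overline{\ran A_+}=(\ker A_+)^\perp{}_\perp$ (using reflexivity to drop the pre-annihilator subtleties), and similarly $\overline{\ran A}=(\ker A^*)_\perp=(\ker A)_\perp$ by Fact~\ref{KerRa:1}. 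Feeding in $\ker A_+=\ker A$ from \ref{RT:3} yields $\overline{\ran A}=\overline{\ran A_+}=\ran A_+$, where the last equality is the closedness hypothesis on $\ran A_+$. I expect this to be the main obstacle: getting the annihilator bookkeeping exactly right in the reflexive Banach (as opposed to Hilbert) setting, in particular correctly identifying $\overline{\ran A_+}$ with $(\ker A_+)^\perp$ — this presumably needs $A_+=\partial\overline{q_A}$ together with a Fenchel-duality argument (the range of $\partial g$ is dense in $\dom g^*$, and symmetry makes things line up), and this is the step where I would spend the most care.

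For \ref{RT:2}$\Rightarrow$\ref{RT:1}: assume $\overline{\ran A}=\ran A_+$. By Lemma~\ref{lemR:1} it suffices to show $\dom A\times\{0\}\subseteq\dom F_A$, i.e. $F_A(b,0)<\pinf$ for every $b\in\dom A$. Write, for $(a,a^*)\in\gra A$,
\begin{equation}
\langle b,a^*\rangle-\langle a,a^*\rangle=\langle b,a^*\rangle-2q_A(a),
\end{equation}
and recall $\langle a,a^*\rangle=\langle a,A_+a\rangle$ for any selections. The idea is that since $\overline{\ran A}=\ran A_+=\dom(\overline{q_A})^*$ (the range of the subdifferential of the closed convex function $\overline{q_A}$ is exactly its conjugate's domain up to closure, and here it is closed), the functional $a^*\mapsto$ "best $b$-pairing" is controlled: more concretely, $F_A(b,0)=\sup_{(a,a^*)\in\gra A}(\langle b,a^*\rangle-\langle a,a^*\rangle)\le \sup_{a\in\dom A}(\langle b,A_+a\rangle-2q_A(a))= (\overline{q_A})^{**\text{-type}}$ expression, which is finite because $b\in\dom A=\dom q_A$ and $q_A$ is a proper lsc convex function whose conjugate-biconjugate machinery is well behaved. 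I would make this precise by identifying $F_A(b,0)$ with $2q_A^*(\thalb A_+ b)$ or an analogous Legendre-type quantity and using that $A_+b\in\ran A_+=\overline{\ran A}$ lies in the domain of the relevant conjugate; the closedness of $\dom A$ and $\ran A_+$ is exactly what makes these suprema finite. Finally, the equivalences \ref{RT:4}$\Leftrightarrow$\ref{RT:5} and \ref{RT:1}$\Leftrightarrow$\ref{RT:6} follow from Proposition~\ref{LemmaCP:2} and Corollary~\ref{LemmaCP:3} respectively (the latter applies since $\dom A$ is closed), which closes the loop and completes the proof.
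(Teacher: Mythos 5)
Most of your cycle coincides with the paper's proof: \ref{RT:1}$\Rightarrow$\ref{RT:4} via Proposition~\ref{PropRDP:1} is a legitimate shortcut, your \ref{RT:4}$\Leftrightarrow$\ref{RT:3} argument is the paper's, and your annihilator computation for \ref{RT:3}$\Rightarrow$\ref{RT:2} is essentially the paper's proof of \ref{RT:2}$\Leftrightarrow$\ref{RT:3} (the paper cites $(A_+)^*=A_+$ from \cite[Proposition~2.8]{BWY8} plus Fact~\ref{Rea:1}\ref{Th:32s} and Fact~\ref{KerRa:1}). One gloss: $\ker A\subseteq\ker A_+$ is \emph{not} ``automatic from monotonicity''; you need the implication $\scal{Ay}{y}=0\Rightarrow y\in\ker A_+$, which the paper derives from $A_+=\partial\overline{q_A}$ (this is precisely where the closedness of $\dom A$ enters, via Fact~\ref{linear}\ref{dms:quad}) together with paramonotonicity of subdifferentials (Fact~\ref{paramono}); alternatively one can use $\ker A=\ker A^*$ from Fact~\ref{KerRa:1}. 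The adjoint equivalences via Proposition~\ref{LemmaCP:2} and Corollary~\ref{LemmaCP:3} are exactly the paper's route.

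The genuine gap is in your step \ref{RT:2}$\Rightarrow$\ref{RT:1}. For $b\in\dom A=\dom A^*$ and $(a,a^*)\in\gra A$, Fact~\ref{Rea:1}\ref{Sia:2b} gives $\scal{b}{a^*}=\scal{A^*b}{a}$, which differs from $\scal{A_+b}{a}=\scal{b}{A_+a}$ by the skew term $\scal{\thalb(A-A^*)b}{a}$, and that term is unbounded in $a$ in general; so your claimed estimate $F_A(b,0)\leq\sup_{a\in\dom A}\big(\scal{b}{A_+a}-2q_A(a)\big)$ is false (for a nonzero skew operator it would give $F_A(b,0)\leq 0$ while $F_A(b,0)=\pinf$ whenever $A^*b\neq 0$). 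The correct identity — and the computational heart of the paper's proof of \ref{RT:1}$\Leftrightarrow$\ref{RT:2} — is $F_A(b,0)=\sup_{a\in\dom A}\big(\scal{A^*b}{a}-2q_A(a)\big)$, i.e.\ a conjugate of $q_A$ evaluated at $A^*b$, \emph{not} at $A_+b$. Consequently your finiteness argument ``$A_+b\in\ran A_+$ lies in the domain of the relevant conjugate'' is vacuous: what must be shown is $A^*b\in\dom q_A^*$. The paper closes this with two ingredients that are missing from your step: $\ran A_+=\dom\overline{q_A}^{\,*}=\dom q_A^*$ (Br{\o}ndsted--Rockafellar plus the closedness of $\ran A_+$ — you do gesture at this) and $\overline{\ran A^*}=\overline{\ran A}$ (Fact~\ref{KerRa:1}, which is where reflexivity is used), so that \ref{RT:2} yields $A^*b\in\ran A^*\subseteq\overline{\ran A}=\ran A_+=\dom q_A^*$. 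With these two facts the paper in fact obtains the full equivalence \ref{RT:1}$\Leftrightarrow$\ref{RT:2} directly; as written, your cycle does not close.
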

\begin{proof} 
Note that Fact~\ref{linear}\ref{sia:3vi} yields
$\dom A=\dom A^*$. 
Thus, by Fact~\ref{Rea:1}\ref{Sia:2b}, we have
\begin{equation}
\label{parke:1}
(\forall x\in\dom A)\quad
\langle Ax,x\rangle=\langle A_+x,x\rangle.
\end{equation}
Now take $y\in\dom A$ such that $\langle Ay,y\rangle=0$.
By \eqref{parke:1}, $\langle A_+y,y\rangle=0$.
Hence, by Fact~\ref{linear}\ref{dms:quad} and Fact~\ref{paramono},
$0\in A_+y$ and thus $y\in\ker A_+$.
We have established the implication 
\begin{equation}
\label{ParKe:1}
y\in\dom A\text{~and~} \langle Ay,y\rangle=0
\quad \Rightarrow \quad y\in\ker A_+\,.
\end{equation}

``\ref{RT:1}$\Leftrightarrow$\ref{RT:2}'': Let $x\in\dom A$.
Then $x\in\dom A^*$ by Fact~\ref{linear}\ref{sia:3vi}.
Hence, by Fact~\ref{Rea:1}\ref{Sia:2b},
\begin{equation}
\label{parr:1}
F_A(x,0)=\tfrac{1}{2}q^*_A(A^*x).
\end{equation}
From the Br{\o}ndsted-Rockafellar Theorem
(see, e.g., \cite[Theorem~3.1.2]{Zalinescu}) and 
Fact~\ref{linear}\ref{dms:quad} it follows that 
$\ran A_+\subseteq\dom \overline{q_A}^*\subseteq\overline{\ran A_+}=\ran A_+$.
Thus $\ran A_+=\dom \overline{q_A}^*=\dom q^*_A$.
Hence, using \eqref{parr:1}, Lemma~\ref{lemR:1},
the closedness of $\ran A_+$, and Fact~\ref{KerRa:1},
we deduce that 
\begin{subequations}
\label{SuPR:2}
\begin{align}
\text{$A$ is rectangular} 
&\Leftrightarrow \ran A^*\subseteq \ran A_+\\
&\Leftrightarrow \overline{\ran A^*}\subseteq \ran A_+\\
&\Leftrightarrow \overline{\ran A}\subseteq
\ran A_+.
\end{align}
\end{subequations}
On the other hand,
by Fact~\ref{KerRa:1}, $\ran A_+\subseteq\ran A+\ran A^*\subseteq\ran A+\overline{\ran A}=\overline{\ran A}$.
Thus recalling \eqref{SuPR:2}, we see that 
$A$ is rectangular $\Leftrightarrow$
$\overline{\ran A}=\ran A_+$.

``\ref{RT:2}$\Leftrightarrow$\ref{RT:3}'': By Fact~\ref{Rea:1}\ref{Th:32s} and Fact~\ref{KerRa:1},
we have $(\ran A)^{\bot}=\ker A^*=\ker A$.
By Fact~\ref{linear}\ref{Nov:s2}\&\ref{dms:quad}, $A_+$ is maximally monotone 
and so $\gra A_+$ is closed.
By \cite[Proposition~2.8]{BWY8}, $(A_+)^*=A_+$.
Hence, using Fact~\ref{Rea:1}\ref{Th:32s} again, 
we obtain $(\ran A_+)^{\bot}=\ker A_+$. 
This establishes the equivalence \ref{RT:2}$\Leftrightarrow$\ref{RT:3}. 

``\ref{RT:3}$\Rightarrow$\ref{RT:4}'':
Let $(a,a^*)\in\gra A$ be such that
$\langle a,a^*\rangle=0$.
Then, by \eqref{ParKe:1} and the assumption, $a\in\ker A_+=\ker A$.
Thus, $0\in Aa$. By Fact~\ref{Rea:1}\ref{Th:28},
$Aa=0+A0=A0$ and hence $a^*\in A0$. 
Lemma~\ref{LemmaCP:1} now implies that $A$ is paramonotone.

``\ref{RT:4}$\Rightarrow$\ref{RT:3}'':
Let $x\in\ker A_+$ and $x^*\in Ax$.
Then by \eqref{parke:1}, $\langle x^*,x\rangle=\langle Ax,x\rangle=\langle A_+x,x\rangle=0$.
Since $A$ is paramonotone, $x^*\in A0$ and hence $Ax=x^*+A0=A0$.
Thus $x\in\ker A$ and so
\begin{equation}
\label{SuPR:3}
\ker A_+\subseteq\ker A.
\end{equation}
On the other hand, take $x\in \ker A$. Then
$\langle Ax,x\rangle=\scal{0}{x}=0$.
Thus, by \eqref{ParKe:1}, $x\in\ker A_+$.
We have shown that $\ker A\subseteq\ker A_+$.  
Combining with \eqref{SuPR:3}, we therefore obtain
$\ker A=\ker A_+$.

``\ref{RT:4}$\Leftrightarrow$\ref{RT:5}'': 
Proposition~\ref{LemmaCP:2}.
``\ref{RT:1}$\Leftrightarrow$\ref{RT:6}'': 
Corollary~\ref{LemmaCP:3}.
\end{proof}

\begin{remark}
In Theorem~\ref{PaFit:1}, the assumption 
that $\ran A_+$ is closed is not superfluous: 
indeed, let $A$ and $B$ be defined as in 
Example~\ref{ExNR:1} below,
where $A$ and $B$ are even continuous linear monotone operators defined on
a Hilbert space, 
and set $C = A+B$. 
Then 
\begin{equation}
\ran C_+=\ran\frac{A+B+(A+B)^*}{2}=\ran\frac{A+B+A-B}{2}=\ran A,
\end{equation}
is a proper dense subspace of $X$, and $C$ is paramonotone 
but not rectangular. Note that $\dom C=X$.
We do not know whether the assumption on the closure of the domain
in Theorem~\ref{PaFit:1} is superfluous. 
\end{remark}

The statement of Theorem~\ref{PaFit:1} simplifies significantly in the 
finite-dimensional setting that we state next.
(See also \cite[Proposition~3.2]{iusem} and \cite[Remark~4.11]{BBW}
for related results pertaining to the case when 
$A$ is single-valued and thus identified with a matrix.)

\begin{corollary}[finite-dimensional setting]
Suppose that $X$ is finite-dimensional,
and let $A\colon X\To  X^*$ be a maximally monotone linear relation.
Then the following are equivalent: 
\begin{enumerate}
\item\label{RTC:1} $A$ is rectangular.
\item \label{RTC:2} $\ran A_+=\ran  A$.
\item \label{RTC:3} $\ker A_+=\ker A$.
\item\label{RTC:4} $A$ is paramonotone.
\item\label{RTC:5} $A^*$ is paramonotone.
\item\label{RTC:6} $A^*$ is rectangular.
\end{enumerate}
\end{corollary}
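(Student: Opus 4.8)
The plan is to obtain the corollary as a direct specialization of Theorem~\ref{PaFit:1}: I would first check that, in the finite-dimensional setting, every hypothesis of that theorem is automatic, and then observe that the two formulations of condition~(ii) coincide.

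First I would record that a finite-dimensional normed space is reflexive, so the blanket reflexivity assumption of Theorem~\ref{PaFit:1} is in force. Next, since $A$ is a linear relation, $\gra A$ is a linear subspace of $X\times X^*$; hence $\dom A$ and $\ran A$, being the coordinate projections of $\gra A$, are linear subspaces of $X$ and $X^*$, respectively. Likewise $A_+$ is a linear relation, so $\ran A_+$ is a linear subspace of $X^*$. Because $X$ is finite-dimensional, every linear subspace is closed; in particular $\dom A$ and $\ran A_+$ are closed, which are exactly the remaining hypotheses of Theorem~\ref{PaFit:1}.

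With all hypotheses verified, Theorem~\ref{PaFit:1} delivers the equivalence of its items \ref{RT:1}, \ref{RT:3}, \ref{RT:4}, \ref{RT:5}, \ref{RT:6}, which are word-for-word \ref{RTC:1}, \ref{RTC:3}, \ref{RTC:4}, \ref{RTC:5}, \ref{RTC:6} of the corollary. It then remains only to fold item \ref{RT:2}, namely $\overline{\ran A}=\ran A_+$, into item \ref{RTC:2}, namely $\ran A=\ran A_+$. But $\ran A$ is a linear subspace of the finite-dimensional space $X^*$, hence closed, so $\overline{\ran A}=\ran A$ and the two conditions are identical. Chaining the equivalences then proves \ref{RTC:1}$\Leftrightarrow\cdots\Leftrightarrow$\ref{RTC:6}.

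I do not expect any genuine obstacle: the corollary is a straightforward reduction, and the only thing to be careful about is to invoke explicitly the fact that subspaces of a finite-dimensional space are closed, so that $\dom A$, $\ran A$, and $\ran A_+$ meet the closedness requirements of Theorem~\ref{PaFit:1} and so that $\overline{\ran A}=\ran A$. If anything, the ``hard part'' is purely expository: deciding how much of Theorem~\ref{PaFit:1}'s apparatus to restate. I would keep the proof to the few lines above and, if desired, append a remark recalling that here $A^*$ is the usual adjoint and $A_+=\tfrac{1}{2}(A+A^*)$, linking condition~\ref{RTC:3} to the matrix criteria mentioned after \cite[Remark~4.11]{BBW}; but this is not needed for the argument.
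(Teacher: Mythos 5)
Your proposal is correct and is exactly the intended argument: the paper states this corollary without proof precisely because it is the direct specialization of Theorem~\ref{PaFit:1} you describe, with reflexivity and the closedness of $\dom A$, $\ran A$, and $\ran A_+$ automatic in finite dimensions (all are linear subspaces), so that $\overline{\ran A}=\ran A$ reconciles the two versions of condition (ii).
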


\section{Linear (single-valued) operators}

\label{s:linop}

The following result, which is complementary to 
Lemma~\ref{LemmaCP:3} and provides characterizations of rectangularity for
continuous linear monotone operators, 
was first proved by  Brezis and Haraux in
 \cite[Proposition~2]{BrezisHaraux} in a Hilbert
space setting. A different proof was provided in \cite[Theorem~4.12]{BBW}. 
Let us now generalize to Banach spaces
(We mention that most of the proof of \ref{RecC:1}$\Leftrightarrow$\ref{RecC:2}
follows along the lines of
\cite[Theorem~4.12(i)$\Leftrightarrow$(ii)]{BBW}.)

\begin{proposition}[characterizations of rectangularity]\label{Para:L1}
Let  $A:X\rightarrow X^*$ be  continuous, linear, and monotone.
Then the following are equivalent: 
\begin{enumerate}
\item\label{RecC:1} $A$ is rectangular.
\item \label{RecC:2} 
$(\exi \beta>0)(\forall x\in X)$
$\langle x, Ax\rangle\geq\beta\|Ax\|^2$, i.e.,
$A$ is $\beta$-cocoercive. 
\item\label{RecC:3} $A^*|_X$ is rectangular.
\end{enumerate}
If $X$ is a real Hilbert space, then 
{\rm \ref{RecC:1}--\ref{RecC:3}} are also equivalent to either of the
following: 
\begin{enumerate}[resume]
\item \label{RecC:4}
$(\exi \gamma>0)$
$\|\gamma A-\Id\|\leq 1$.
\item \label{RecC:5}
$(\exi \beta>0)$
$A^{-1}-\beta\Id$ is monotone. 
\end{enumerate}
\end{proposition}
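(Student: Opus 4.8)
The plan is to establish the chain $\ref{RecC:1}\Rightarrow\ref{RecC:2}\Rightarrow\ref{RecC:3}\Rightarrow\ref{RecC:1}$ for the Banach-space part, and then $\ref{RecC:2}\Leftrightarrow\ref{RecC:4}$ and $\ref{RecC:2}\Leftrightarrow\ref{RecC:5}$ for the Hilbert-space part. Throughout, recall that $A$ is automatically maximally monotone (Fact~\ref{F:1}), that $\dom A=X=\dom(A^*|_X)$, and that $A^*|_X$ is monotone by Fact~\ref{linear}\ref{f:PheSim:brah:1}. The Fitzpatrick function here has the explicit form $F_A(x,x^*)=\sup_{a\in X}\big(\langle x,Aa\rangle+\langle a,x^*\rangle-\langle a,Aa\rangle\big)$, and by Lemma~\ref{lemR:1} rectangularity of $A$ is equivalent to $F_A(x,0)<\pinf$ for every $x\in X$, i.e.\ $\sup_{a\in X}\big(\langle x,Aa\rangle-\langle a,Aa\rangle\big)<\pinf$ for each $x$.

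For $\ref{RecC:1}\Rightarrow\ref{RecC:2}$: I would fix $x\in X$ and, in the supremum defining $F_A(x,0)$, restrict to the ray $a=tA x$, $t\in\RR$. Using linearity this gives $F_A(x,0)\ge\sup_{t}\big(t\langle x,A(Ax)\rangle+t\langle x,Ax\rangle-t^2\langle Ax,A(Ax)\rangle\big)$, but the cleaner choice is to substitute $a=tx$, yielding the quadratic $t\langle x,Ax\rangle-t^2\langle x,Ax\rangle$ — not directly helpful. Instead I expect the right substitution, as in \cite[Theorem~4.12]{BBW}, is $a = x - tA^*x$ (or a scalar multiple of $A^*x$ added to $x$); plugging this in produces a quadratic in $t$ whose coefficient of $t^2$ is $-\langle A^*x,A(A^*x)\rangle$ and whose leading behaviour forces $\|A^*x\|^2 \le (\text{const})\cdot\langle x,Ax\rangle$ after invoking Fact~\ref{FQ:1}. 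Since $\|A^*x\|=\|Ax\|$ when we use the canonical pairing and $A$ continuous, uniform finiteness of $F_A(\cdot,0)$ on the unit ball (a Banach--Steinhaus / closed-graph argument: $x\mapsto F_A(x,0)$ is convex, lsc, finite-valued, hence continuous, hence bounded on $B_X$) yields a single $\beta>0$ working for all $x$. I expect the extraction of the \emph{uniform} constant $\beta$ — as opposed to an $x$-dependent one — to be the main obstacle, and the tool for it is precisely the lower semicontinuity and convexity of $F_A$ from Fact~\ref{f:Fitz} together with Fact~\ref{pheps:1}.

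For $\ref{RecC:2}\Rightarrow\ref{RecC:3}$: cocoercivity is symmetric in $A$ and $A^*|_X$ in the following sense — $\langle x,Ax\rangle=\langle A^*x,x\rangle$ and $\|Ax\|=\|A^*x\|$ under the canonical identification, so $\langle x,Ax\rangle\ge\beta\|Ax\|^2$ rewrites as $\langle x,A^*x\rangle\ge\beta\|A^*x\|^2$; then I would run $\ref{RecC:2}\Rightarrow\ref{RecC:1}$ applied to $A^*|_X$ in place of $A$. For $\ref{RecC:2}\Rightarrow\ref{RecC:1}$ directly: given $\beta$-cocoercivity, for each $x$ I estimate $\langle x,Aa\rangle-\langle a,Aa\rangle = \langle Aa,x-a\rangle \le \|Aa\|\,\|x-a\|-\beta\|Aa\|^2$, which as a function of the scalar $\|Aa\|$ is bounded above by $\|x-a\|^2/(4\beta)$; but $\|x-a\|$ is still unbounded in $a$, so this crude bound fails. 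The correct route is the one in Proposition~\ref{RecT:1}: cocoercivity gives $\langle Aa,a-x\rangle\ge\beta\|Aa\|^2\ge$ (growth in $\|a\|$ once $\|Aa\|$ is bounded below) — one shows $A$ is strongly coercive and quotes Proposition~\ref{RecT:1}; when $Aa$ stays bounded, continuity of $A^{-1}$-type estimates or boundedness of $\{a: \|Aa\|\le M\}$ modulo $\ker A$ (and $\ker A\subseteq$ the level set trivially contributes nothing, since $\langle Aa,x-a\rangle=0$ there) handle the remaining case. This is routine and I would present it briefly.

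For the Hilbert-space equivalences: $\ref{RecC:2}\Leftrightarrow\ref{RecC:4}$ is the standard identity $\|\gamma A-\Id\|\le1 \iff (\forall x)\ \|\gamma Ax-x\|^2\le\|x\|^2 \iff \gamma^2\|Ax\|^2-2\gamma\langle x,Ax\rangle\le0 \iff \langle x,Ax\rangle\ge\tfrac{\gamma}{2}\|Ax\|^2$, so take $\beta=\gamma/2$ one way and $\gamma=2\beta$ (note $\gamma>0$) the other, using $\langle x,Ax\rangle\ge0$. And $\ref{RecC:2}\Leftrightarrow\ref{RecC:5}$: $A^{-1}-\beta\Id$ monotone means $(\forall u,v\in\ran A)\ \langle u-v,(A^{-1}-\beta\Id)u-(A^{-1}-\beta\Id)v\rangle\ge0$; writing $u=Ax$, $v=Ay$ this is $\langle Ax-Ay,x-y\rangle\ge\beta\|Ax-Ay\|^2$, which by linearity (replace $x-y$ by $x$) is exactly $\ref{RecC:2}$ — here one must check $A^{-1}$ is well-defined enough: $A^{-1}$ is a linear relation with $A^{-1}0=\ker A$, and the inequality is understood via any single-valued selection, or equivalently via the graph, and linearity makes the ``$x-y$'' reduction legitimate. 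I would note this subtlety and move on. The only genuinely delicate point in the whole proof remains the uniform-constant extraction in $\ref{RecC:1}\Rightarrow\ref{RecC:2}$.
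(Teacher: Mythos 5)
Your plan for the Banach-space core---the actual heart of the proposition---has genuine gaps. In \ref{RecC:1}$\Rightarrow$\ref{RecC:2}, the identity $\|A^*x\|=\|Ax\|$ that you invoke (here, and again to get \ref{RecC:2}$\Rightarrow$\ref{RecC:3}) is false pointwise; only the operator norms of $A$ and $A^*$ agree. For instance, in $\RR^2$ the matrix $A=\left(\begin{smallmatrix}1&2\\0&1\end{smallmatrix}\right)$ is monotone and for $x=(1,0)$ one has $\|Ax\|=1$ while $\|A^*x\|=\sqrt{5}$. Moreover the trial point $a=x-tA^*x$ is not even well formed outside Hilbert space ($A^*x\in X^*$, $x\in X$), and in Hilbert space it produces the quantity $\scal{A^*x}{Ax}$, not $\|Ax\|^2$, so Fact~\ref{FQ:1} applied to that quadratic does not yield cocoercivity. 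You correctly identify the key tool---$f=F_A(\cdot,0)$ is convex, lower semicontinuous and finite by Lemma~\ref{lemR:1}, hence continuous by Fact~\ref{pheps:1}---but you never reach the inequality that actually does the work. The paper's step is: continuity at $0$ (where $f(0)=0$) gives $\alpha,\delta>0$ with $f\leq\alpha$ on $\delta B_X$; taking the supremum over the \emph{first} variable $y\in\delta B_X$ (not a substitution in $a$) gives $\delta\|Aa\|\leq\alpha+\scal{a}{Aa}$ for every $a$; then replacing $a$ by $\rho a$ and applying Fact~\ref{FQ:1} to the quadratic in $\rho$ eliminates $\alpha$ and yields the uniform constant $\beta=\delta^2/(4\alpha)$.

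For \ref{RecC:2}$\Rightarrow$\ref{RecC:1} you discard the direct estimate for the wrong reason and replace it by a route that fails. Splitting the two terms separately, $\scal{x}{Aa}-\scal{a}{Aa}\leq\|x\|\,\|Aa\|-\beta\|Aa\|^2\leq\|x\|^2/(4\beta)$; no $\|x-a\|$ ever appears, so $X\times\{0\}\subseteq\dom F_A$ and Lemma~\ref{lemR:1} finishes---this is exactly the paper's two-line argument. By contrast, a cocoercive operator need not be strongly coercive (take $A=0$, or $A\colon\ell^2\to\ell^2$, $(x_n)_\nnn\mapsto(x_n/n)_\nnn$, for which $\{a:\|Aa\|\leq M\}$ is unbounded and $\ker A=\{0\}$), so Proposition~\ref{RecT:1} cannot be invoked and your ``remaining case'' has no actual argument. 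Finally, since \ref{RecC:2}$\Rightarrow$\ref{RecC:3} rests on the false norm identity, the equivalence with \ref{RecC:3} is unproven in your plan (and the return implication \ref{RecC:3}$\Rightarrow$\ref{RecC:1} is never addressed); the paper gets \ref{RecC:1}$\Leftrightarrow$\ref{RecC:3} directly from Corollary~\ref{LemmaCP:3} (via Lemma~\ref{LemmaCP:3a}), which you may quote since $\dom A=X$ is closed. Your Hilbert-space arguments for \ref{RecC:4} and \ref{RecC:5} are correct and in fact more self-contained than the paper's citations, including the remark about $A^{-1}$ being a linear relation.
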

\begin{proof}
``\ref{RecC:1}$\Rightarrow$\ref{RecC:2}'': 
Set
\begin{equation}
f\colon X\to\RX\colon 
x\mapsto F_A(x,0).
\end{equation}
Note that $X\times \{0\}\subseteq\dom F_A$ by Lemma~\ref{lemR:1}. 
Thus, by Fact~\ref{pheps:1}, $f$ is continuous.
Since $(0,0)\in\gra A$,  Fact~\ref{f:Fitz} now yields 
$f(0)=F_A(0,0)=\langle 0,0\rangle=0$.
In view of the continuity of $f$ at $0$, there
exist $\alpha>0$ and $\delta>0$  such that
\begin{equation}
\big(\forall y\in\delta B_X\big)\quad
\sup_{a\in X}\big(\langle y, Aa\rangle-\langle a, Aa\rangle\big)
=F_A(y,0)=f(y)\leq \alpha.
\end{equation}
Hence $(\forall y\in\delta B_X)(\forall a\in X)$
$\langle y, Aa\rangle \leq \alpha+\langle a, Aa\rangle$.
Thus,
\begin{equation}
\label{BrHa:1}
(\forall a\in X)\quad 
\delta\| Aa\|=\sup\langle \delta B_X, Aa\rangle\leq
 \alpha+\langle a, Aa\rangle.
\end{equation}
Replacing $a$ by $\rho a$ and invoking the linearity of $A$, we obtain
\begin{subequations}
\label{BrHa:2}
\begin{align}
(\forall a\in X)(\forall \rho\in\RR)\quad
0&\leq-|\rho|\delta\| Aa\| +\alpha+\rho^2\langle a, Aa\rangle\\
&\leq-\rho\delta\| Aa\|
+ \alpha+\rho^2\langle a, Aa\rangle.
\end{align}
\end{subequations}
Then, by Fact~\ref{FQ:1} and \eqref{BrHa:2}, it follows that 
\begin{equation}
(\forall a\in X)\quad 
\delta^2\| Aa\|^2\leq 4\alpha\langle a, Aa\rangle. 
\end{equation}
The desired conclusion holds with 
$\beta:={\delta^2}/{(4\alpha)}$. 

``\ref{RecC:2}$\Rightarrow$\ref{RecC:1}'': 
Let $x\in X$. Then 
\begin{subequations}
\begin{align}
F_A(x,0)&=\sup_{a\in X}\big(
\langle x, Aa\rangle-\langle a, Aa\rangle\big)\\
&\leq\sup_{a\in X}\big(\| x\|\cdot \|Aa\|-\beta\|Aa\|^2\big)\\
&\leq\sup_{t\in\RR}\big(t\| x\|-\beta t^2\big)\\
&= \frac{\|x\|^2}{4\beta}.
\end{align}
\end{subequations}
Hence $X\times \{0\}\subseteq \dom F_A$ and the conclusion
follows from Lemma~\ref{lemR:1}.

``\ref{RecC:3}$\Leftrightarrow$\ref{RecC:1}'':  
Corollary~\ref{LemmaCP:3}. 
Now assume that $X$ is a real Hilbert space.
The proof concludes as follows. 
``\ref{RecC:1}$\Leftrightarrow$\ref{RecC:4}'': 
\cite[Theorem~4.12]{BBW}.
``\ref{RecC:2}$\Leftrightarrow$\ref{RecC:5}'': 
\cite[Example~22.6]{BC2011}.
\end{proof}

\begin{remark}
Some comments regarding Proposition~\ref{Para:L1} are in order. 
\begin{enumerate}
\item
When $X$ is a real Hilbert space and $A^*=A\neq 0$,
then \ref{RecC:2} holds with $\beta = 1/\|A\|$;
see \cite[Corollary~18.17]{BC2011} or
\cite[Corollary~3.4]{BC2010}. 
\item 
The condition in \ref{RecC:5} is also known as
\emph{strong monotonicity} of $A^{-1}$ with constant $\beta$.
The constant $\beta$ in item~\ref{RecC:2} is indeed the same as
the one in item~\ref{RecC:5}.
\end{enumerate}
\end{remark}

Let us now provide some ``bad'' operators
(see Section~\ref{s:id-t} for some ``good'' operators).

\begin{example}
[Volterra operator is neither rectangular nor paramonotone]
\label{ex:Volterra}\ \\
{\rm (See also \cite[Example~3.3]{BBW} and \cite[Problem~148]{Halmos}.)}
Suppose that $X=L^2[0,1]$.
Recall that 
the \emph{Volterra integration
operator} is defined by
\begin{equation} \label{e:aug17:a}
V \colon X \to X \colon x \mapsto Vx, \quad\text{where}\quad
Vx\colon[0,1]\to\RR\colon t\mapsto \int_{0}^{t}x,
\end{equation}
and that its adjoint is given by
\begin{equation}
V^* \colon X \to X \colon x \mapsto V^*x, \quad\text{where}\quad
V^*x\colon[0,1]\to\RR\colon t\mapsto \int_{t}^{1}x.
\end{equation}
Then
$V$ is neither paramonotone nor rectangular; 
consequently, $V^*$ is
neither paramonotone nor rectangular.
\end{example}
\begin{proof} 
Set $e\equiv 1\in X$.
Then 
by \cite[Example~3.3]{BBW}, 
$V$ is a continuous, injective, linear, and maximally monotone,
with symmetric part $V_+\colon x\mapsto \thalb\scal{x}{e}e$. 
Now pick 
$x\in \{e\}^\perp\smallsetminus\{0\}$ 
(for instance, consider $t\mapsto t-\thalb$). 
Then 
 \begin{equation}
 \langle x,Vx\rangle=\langle x, V_+x\rangle=\big\langle x,\tfrac{1}{2}\langle e,x\rangle e\big\rangle
 =\tfrac{1}{2}\langle e,x\rangle^2=0.
 \end{equation}
However, since $x\neq 0$ and $V$ is injective, we have
$Vx\neq0=V0$. Thus $V$ cannot be paramonotone.  
By Proposition~\ref{PropRDP:1}, $V$ is not rectangular.  
It follows from Proposition~\ref{Para:L1} and Proposition~\ref{LemmaCP:2}
that $V^*$ is neither paramonotone nor rectangular.
\end{proof}

The next result provides a mechanism for constructing
paramonotone linear operators that are not rectangular.

\begin{proposition}\label{PExNR:1}
Suppose that $X$ is reflexive.
Let $A$ and $B$ be continuous, linear, and monotone on $X$.
Suppose that 
$A$ is paramonotone and $\ran A$ is a proper dense subspace of $X$, and
that $B$ is bijective and  skew. 
Then $A+B$ is maximally monotone, strictly monotone and paramonotone,
but not rectangular.
\end{proposition}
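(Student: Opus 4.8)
The plan is to write $C:=A+B$ and verify the four assertions one at a time. Maximal monotonicity is the easiest: $C$ is linear and continuous, and since $B$ is skew we have $\scal{x}{Cx}=\scal{x}{Ax}+\scal{x}{Bx}=\scal{x}{Ax}\ge 0$ for every $x$, so $C$ is monotone and Fact~\ref{F:1} applies. Before treating the remaining items I would record that $A$ is injective: density of $\ran A$ gives $(\ran A)^{\bot}=\{0\}$, hence $\ker A^*=\{0\}$ by Fact~\ref{Rea:1}\ref{Th:32s}, and since $X$ is reflexive and $A$ is a maximally monotone linear relation, Fact~\ref{KerRa:1} yields $\ker A=\ker A^*=\{0\}$.

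With injectivity available, strict monotonicity of $C$ is quick. For $z\ne 0$ one has $\scal{z}{Cz}=\scal{z}{Az}$ (skewness of $B$ again); were this zero, then paramonotonicity of $A$ and Lemma~\ref{LemmaCP:1}, together with $A0=\{0\}$, would give $Az=0$, i.e. $z\in\ker A=\{0\}$, a contradiction. Hence $\scal{z}{Cz}>0$ for all $z\ne 0$, so by linearity $C$ is strictly monotone; and a strictly monotone single-valued operator is automatically paramonotone, since the hypothesis $\scal{x-y}{Cx-Cy}=0$ already forces $x=y$, making the paramonotonicity implication vacuously true.

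The real content is that $C$ is \emph{not} rectangular. I would argue by contradiction: if $C$ were rectangular, then (since $C$ is continuous, linear and monotone) Proposition~\ref{Para:L1} would supply $\beta>0$ with $\scal{x}{Cx}\ge\beta\|Cx\|^2$ for all $x$. Bijectivity of $B$ gives, via the bounded inverse theorem, a constant $m>0$ with $\|Bx\|\ge m\|x\|$ for all $x$. Since $\ran A$ is a proper dense subspace it is not closed, so $A^{-1}\colon\ran A\to X$ is unbounded (otherwise $A$ would be bounded below and $\ran A$ would be closed); choose $(y_n)$ in $\ran A$ with $\|y_n\|\le 1$ and $\|A^{-1}y_n\|\to\pinf$, and put $x_n:=A^{-1}y_n/\|A^{-1}y_n\|$, so $\|x_n\|=1$ and $Ax_n=y_n/\|A^{-1}y_n\|\to 0$. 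Then $0\le\scal{x_n}{Cx_n}=\scal{x_n}{Ax_n}\le\|Ax_n\|\to 0$, whence $\|Cx_n\|^2\le\beta^{-1}\scal{x_n}{Cx_n}\to 0$; since $Ax_n\to 0$ this forces $Bx_n=Cx_n-Ax_n\to 0$, contradicting $\|Bx_n\|\ge m\|x_n\|=m>0$. I expect this last part to be the main obstacle — specifically, the insight that the cocoercivity characterization of Proposition~\ref{Para:L1} converts rectangularity into an inequality that cannot coexist with $A$ having non-closed range while $B$ is bounded below; the remaining verifications are routine.
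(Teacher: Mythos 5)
Your argument is correct and follows the paper's strategy almost line for line: maximal monotonicity via Fact~\ref{F:1}, injectivity of $A$ from $(\ran A)^{\perp}=\{0\}$ together with Fact~\ref{Rea:1}\ref{Th:32s} and Fact~\ref{KerRa:1}, strict (hence para-) monotonicity of $C=A+B$ from skewness of $B$ and paramonotonicity of $A$, and finally the cocoercivity characterization of Proposition~\ref{Para:L1} played off against the lower bound $\|Bx\|\geq m\|x\|$ and a unit-norm sequence with $Ax_n\to 0$. The one step where you genuinely diverge is the construction of that sequence. The paper takes $z^*\in\overline{\ran A}\smallsetminus\ran A$, a sequence $y_n$ with $Ay_n\to z^*$, and uses reflexivity (extracting a weakly convergent subsequence and invoking weak continuity of $A$) to show $\|y_n\|\to\pinf$ before normalizing. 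You instead note that, $A$ being injective with non-closed range, $A^{-1}$ must be unbounded on $\ran A$ --- since a continuous linear operator bounded below has closed range --- and normalize the preimages of a bounded sequence. Your version of this step is more elementary and does not consume reflexivity (which is still needed earlier, for $\ker A=\ker A^*$ via Fact~\ref{KerRa:1}); both routes deliver the same sequence and the same terminal contradiction $0\geq\beta m^2$.
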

\begin{proof}
Set $C := A+B$. 
By Fact~\ref{F:1}, $A$, $B$, and $C$  are maximally monotone, linear, and
continuous. 
Since $\ran A$ is proper dense, 
Fact~\ref{Rea:1}\ref{Th:32s} and Fact~\ref{KerRa:1} imply that
\begin{equation}
\ker A=(\ran A^*)^{\bot}=(\ran A)^{\bot}=\{0\}.\label{PExNR:Ea1}
\end{equation}
Since $B$ is skew, for every $x\in X$,
\begin{equation}
\langle Cx, x\rangle=\langle (A+B)x, x\rangle=\langle Ax, x\rangle.
\end{equation}
Using \eqref{PExNR:Ea1} and the paramonotonicity of $A$, we obtain
\begin{equation}
(\forall x\in X)\quad 
\langle Cx, x\rangle=0\Leftrightarrow Ax=0\Leftrightarrow x=0.
\end{equation}
Therefore, $C$ is strictly and thus paramonotone. 

By the Bounded Inverse Theorem, 
$B^{-1}$ is a bounded linear operator; thus,
there exists $\gamma>0$ such that
\begin{equation}
\label{PExNR:E2}
(\forall x\in X)\quad 
\|Bx\|\geq \gamma \|x\|.
\end{equation}

We now claim that  there exists a sequence $(x_n)_{n\in\NN}$
such that
\begin{align}
(\forall\nnn)\quad \|x_n\|=1,
\quad\text{while}\quad Ax_n\to 0.\label{PExNR:E1}
\end{align}
Since $\ran A$ is not closed, we take 
$z^*\in\overline{\ran A} \smallsetminus \ran A$.  
Thus there exists a sequence $(y_n)_{n\in\NN}$ in $X$ such that
\begin{equation}
Ay_n\to z^*.\label{PNR:s1}
\end{equation}
Let us show that $\|y_n\|\to\pinf$. 
Otherwise, $(y_n)_\nnn$ possesses a 
a weakly convergent subsequence---which for convenience we still 
denote by $(y_n)_{n\in\NN}$--- say, $y_n\weakly y\in X$.  
By \cite[Theorem~3.1.11]{Megg}, $Ay_n\weakly Ay$. 
Combining with \eqref{PNR:s1}, we deduce that 
$z^*=Ay$, which contradicts our assumption that $z^*\notin\ran A$.
Hence $\|y_n\|\to\pinf$. We assume that
$(\forall\nnn)$ $y_n\neq 0$ and we set $x_n := y_n/\|y_n\|$.
By \eqref{PNR:s1}, $Ax_n\to 0$. This completes the proof of 
\eqref{PExNR:E1}. 

Now suppose that $\beta\geq 0$ satisfies
\begin{equation}
(\forall x\in X)\quad 
\langle Cx,x\rangle\geq\beta\|Cx\|^2.
\label{Expa:1}
\end{equation}
Using \eqref{Expa:1}, we estimate that for every $\nnn$ 
\begin{subequations}
\begin{align}
\|Ax_n\|&=\|Ax_n\|\cdot\|x_n\|\geq\langle Ax_n,x_n\rangle=\langle
Cx_n,x_n\rangle\\
&\geq\beta\|Cx_n\|^2=\beta\|(A+B)x_n\|^2\\
&\geq \beta\big(\|Bx_n\|-\|Ax_n\|\big)^2. 
\end{align}
\end{subequations}
Taking $\varliminf$ and recalling 
\eqref{PExNR:E1}\&\eqref{PExNR:E2},
we deduce that
$0\geq\beta\gamma^2$ and hence $\beta = 0$.
Therefore, by Proposition~\ref{Para:L1}, 
$C$ cannot be rectangular. 
\end{proof}

The following is an incarnation of Proposition~\ref{PExNR:1} in
the Hilbert space of real square-summable sequences. It provides
a counterexample complementary to Example~\ref{ExaRniP}. 

\begin{example}[paramonotone $\not\Rightarrow$ rectangular]
\label{ExNR:1}
Suppose that $X=\ell^2$, and define two 
continuous linear maximally monotone operators on $X$ via 
\begin{equation}
A\colon X\to X\colon
(x_n)_\nnn\mapsto \big(\tfrac{1}{n}x_n\big)_\nnn
\end{equation}
and 
\begin{equation}
B\colon X\to X\colon
(x_n)_\nnn\mapsto (-x_2,x_1,-x_4,x_3,\ldots).
\end{equation}
Then $A+B$ is maximally monotone, strictly monotone and paramonotone, but
not rectangular. 
\end{example}

\section{Displacement mappings}

\label{s:id-t}

For certain maximally monotone operators in Hilbert space, 
we are able to obtain
sharper conclusions as the following result illustrates.
Let us assume that $X$ is a Hilbert space, and let 
$T\colon X\to X$. Recall that $T$ is \emph{nonexpansive}
if 
\begin{equation}
(\forall x\in X)(\forall y\in X)\quad
\|Tx-Ty\|\leq \|x-y\|;
\end{equation}
$T$ is \emph{firmly nonexpansive}
if 
\begin{equation}
(\forall x\in X)(\forall y\in X)\quad
\|Tx-Ty\|^2\leq\scal{x-y}{Tx-Ty};
\end{equation}
$T$ is \emph{$\thalb$-cocoercive} if
$\thalb T$ is firmly nonexpansive. 
It is straightforward to show that $T$ is firmly nonexpansive
if and only if $2T-\Id$ is nonexpansive. 
A deeper result due to Minty \cite{Minty} 
(see, e.g., \cite{Minty}, \cite{EckBer}, and
\cite[Chapter~23]{BC2011}) 
states that the class of  firmly nonexpansive operators on $X$ 
coincides with 
the class of resolvents $J_A = (\Id+A)^{-1}$,
where $A$ is an arbitrary maximally monotone
operator on $X$.

\begin{theorem}[displacement mapping]
\label{t:frozen}
Suppose that $X$ is a Hilbert space,
let $T\colon X\to X$ be 
nonexpansive,  
and define the corresponding \emph{displacement mapping} by 
\begin{equation}
A = \Id-T.
\end{equation}
Then the following hold:
\begin{enumerate}
\item 
\label{t:frozen1}
$A$ is maximally monotone.
\item 
\label{t:frozen2}
$A$ is rectangular. 
\item 
\label{t:frozen3}
$A$ is $\thalb$-cocoercive.
\item 
\label{t:frozen4}
$A^{-1}$ is strongly monotone with constant $\thalb$. 
\item
\label{t:frozen5}
$A^{-1}$ is strictly monotone.
\item
\label{t:frozen6}
$A$ is paramonotone. 
\end{enumerate}
\end{theorem}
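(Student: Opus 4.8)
The plan is to deduce all six assertions from a single fact, namely \ref{t:frozen3}: that $A=\Id-T$ is $\thalb$-cocoercive, which I would establish first. By the equivalence recalled just above the theorem ($S$ is firmly nonexpansive $\iff 2S-\Id$ is nonexpansive), applied to $S=\thalb A$, the operator $\thalb A$ is firmly nonexpansive precisely when $2(\thalb A)-\Id=A-\Id=-T$ is nonexpansive; and $-T$ is nonexpansive because $T$ is. Hence $\thalb A$ is firmly nonexpansive, which is exactly the statement that $A$ is $\thalb$-cocoercive. (Equivalently, and just as quickly, $\scal{x-y}{Ax-Ay}-\thalb\|Ax-Ay\|^2=\thalb\|x-y\|^2-\thalb\|Tx-Ty\|^2\ge 0$ for all $x,y\in X$.) Note that $A$ is single-valued with $\dom A=X$, since $T$ is a function on all of $X$; so the notions of strict/strong monotonicity and of paramonotonicity are unambiguous here.

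Four of the remaining assertions are then one-line consequences. For \ref{t:frozen1}: a monotone, single-valued, (Lipschitz-)continuous operator that is everywhere defined on the Hilbert space $X$ is maximally monotone (see, e.g., \cite{BC2011}); alternatively, $\Id+A=2\Id-T$ is surjective because for each $y\in X$ the map $x\mapsto\thalb(y+Tx)$ is a $\thalb$-contraction on the complete space $X$ and hence has a fixed point, whence maximal monotonicity follows from Minty's theorem \cite{Minty}. For \ref{t:frozen4}: the equivalence ``$A$ is $\beta$-cocoercive $\iff A^{-1}-\beta\Id$ is monotone'' (\cite[Example~22.6]{BC2011}), applied with $\beta=\thalb$, says precisely that $A^{-1}$ is strongly monotone with constant $\thalb$. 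For \ref{t:frozen5}: if $(u,x),(v,y)\in\gra A^{-1}$ with $u\ne v$, then \ref{t:frozen4} gives $\scal{u-v}{x-y}\ge\thalb\|u-v\|^2>0$. For \ref{t:frozen6}: if $\scal{a-b}{Aa-Ab}=0$, then $\thalb$-cocoercivity forces $\thalb\|Aa-Ab\|^2\le 0$, so $Aa=Ab$; hence $\{(a,Ab),(b,Aa)\}=\{(a,Aa),(b,Ab)\}\subseteq\gra A$, i.e., $A$ is paramonotone.

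The one assertion that needs genuine work is \ref{t:frozen2}. Since $\dom A=X$, rectangularity amounts to $X\times\ran A\subseteq\dom F_A$; the linear-relation tools (Lemma~\ref{lemR:1}, Proposition~\ref{PropRDP:1}) do not apply because $A$ need not be linear, and Proposition~\ref{RecT:1} does not apply either because a displacement mapping need not be strongly coercive (e.g.\ $T=\Id$ gives $A\equiv 0$). I would therefore carry out the Brezis--Haraux estimate directly: fix $x\in X$ and $b\in X$, and for arbitrary $a\in X$ use the algebraic identity
\begin{align*}
&\scal{x}{Aa}+\scal{a}{Ab}-\scal{a}{Aa}\\
&\qquad=\scal{x}{Aa-Ab}+\scal{x}{Ab}+\scal{a-b}{Ab-Aa}+\scal{b}{Ab-Aa}
\end{align*}
together with $\scal{a-b}{Ab-Aa}\le-\thalb\|Aa-Ab\|^2$ (from \ref{t:frozen3}) and Cauchy--Schwarz on the remaining two $a$-dependent terms to obtain the upper bound $(\|x\|+\|b\|)t-\thalb t^2+\scal{x}{Ab}$, where $t:=\|Aa-Ab\|\ge 0$. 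Taking the supremum over $t\ge 0$ yields
\begin{equation*}
F_A(x,Ab)\le\thalb(\|x\|+\|b\|)^2+\scal{x}{Ab}<\pinf,
\end{equation*}
so $\dom A\times\ran A\subseteq\dom F_A$ and $A$ is rectangular. This quadratic-in-$t$ estimate is the only computational step; the point to get right is that one restricts the second argument of $F_A$ to $\ran A$ rather than to all of $X^*$, since a displacement mapping need not be coercive.
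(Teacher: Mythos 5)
Your proof is correct, and for the two items that require real content it takes a different route from the paper. For \ref{t:frozen2}\&\ref{t:frozen3} the paper goes through Minty's parametrization: it writes $T=2J_B-\Id$ for some maximally monotone $B$, deduces $A=\Id-T=2(\Id-J_B)=2J_{B^{-1}}$, and then cites the facts that resolvents are rectangular and that positive scalar multiples of rectangular operators are rectangular (\cite[Example~24.16 and Proposition~24.15(ii)]{BC2011}); cocoercivity then falls out of the same identity. You instead prove \ref{t:frozen3} first by the elementary identity $\scal{x-y}{Ax-Ay}-\thalb\|Ax-Ay\|^2=\thalb\|x-y\|^2-\thalb\|Tx-Ty\|^2\ge 0$ (equivalently, $2(\thalb A)-\Id=-T$ is nonexpansive), and then run the Brezis--Haraux estimate directly on $F_A(x,Ab)$; your algebraic identity and the quadratic bound $(\|x\|+\|b\|)t-\thalb t^2$ are correct and give $F_A(x,Ab)\le\thalb(\|x\|+\|b\|)^2+\scal{x}{Ab}<\pinf$. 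This is self-contained where the paper's argument outsources the work to cited results, and it in effect proves the more general statement that every cocoercive operator on a Hilbert space is rectangular --- a nonlinear analogue of the implication \ref{RecC:2}$\Rightarrow$\ref{RecC:1} in Proposition~\ref{Para:L1}, whose proof your estimate closely parallels. Your treatment of \ref{t:frozen1} (continuity plus full domain, or Minty via the Banach contraction $x\mapsto\thalb(y+Tx)$) replaces the paper's citation of \cite[Example~20.26]{BC2011}; items \ref{t:frozen4}--\ref{t:frozen6} match the paper essentially verbatim, your paramonotonicity argument via $\thalb\|Aa-Ab\|^2\le 0$ being a slightly cleaner version of the paper's Cauchy--Schwarz equality-case argument.
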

\begin{proof}
\ref{t:frozen1}:
The maximal monotonicity of $A$ is well known;
see, e.g., \cite[Example~20.26]{BC2011}.
\ref{t:frozen2}\&\ref{t:frozen3}:
Since $T$ is nonexpansive,
\cite[Proposition~4.2 and Proposition~23.7]{BC2011} yield
the existence of a maximally monotone operator $B$ on $X$ such that
$T = 2J_B-\Id$, where $J_B = (\Id+B)^{-1}$ denotes the resolvent of $B$.
Thus $A = \Id - T = 2(\Id-J_B)=2J_{B^{-1}}$ by 
\cite[Proposition~23.18]{BC2011}. 
Now \cite[Example~24.16]{BC2011} implies the
rectangularity of $J_{B^{-1}}$.
Thus, by \cite[Proposition~24.15(ii)]{BC2011}, 
$A= 2J_{B^{-1}}$ is rectangular as well. 
Hence $\thalb A$ is firmly nonexpansive, i.e.,
$A$ is $\thalb$-cocoercive. 
\ref{t:frozen4}: 
It follows from \ref{t:frozen3} and 
\cite[Example~22.6]{BC2011} that $A^{-1}$ is
strongly monotone with constant $\thalb$. 
\ref{t:frozen5}: Immediate from \ref{t:frozen4}. 
\ref{t:frozen6}: 
Let $x$ and $y$ be in $X$ such that
$0=\scal{x-y}{Ax-Ay}=\|x-y\|^2-\scal{x-y}{Tx-Ty}$.
Then $\|x-y\|^2 =\scal{x-y}{Tx-Ty}\leq\|x-y\|\cdot\|Tx-Ty\|
\leq \|x-y\|^2$.
By Cauchy-Schwarz, $x-y=Tx-Ty$, i.e,. $Ax=Ay$. 
Thus, $A$ is paramonotone.
(Alternatively, one may argue as follows:
$A^{-1}$ is strictly monotone by \ref{t:frozen6}
and hence paramonotone. Thus $(A^{-1})^{-1} = A$ is paramonotone 
as well, as the inverse of a paramonotone operator.)
\end{proof}

The displacement mapping in the following example is 
generally not a subdifferential
operator (unless $m=2$); however, it is rectangular and paramonotone.
Moreover, the 
rectangularity of the displacment mapping 
played a crucial role in \cite{Bau:1} and \cite{BMMW}.

\begin{example}[cyclic right-shift operator]
Let $Y$ be a real Hilbert space, let $m\in\NN$, and suppose
that $X$ is the Hilbert product space $Y^m$. 
Define the \emph{cyclic right-shift operator}
$R$ by
\begin{equation}
R\colon X\to X \colon (x_1,x_2,\dots,x_m)
\mapsto (x_m, x_1,\ldots,x_{m-1}).
\end{equation} 
Then $\Id-R$ is rectangular and paramonotone.
\end{example}
\begin{proof}
Since $(\forall x\in X)$ $\|Rx\|=\|x\|$,
we deduce that $R$ is nonexpansive.
Now apply Theorem~\ref{t:frozen}. 
\end{proof}

\section*{Acknowledgments}
Heinz Bauschke was partially supported by the Natural Sciences and
Engineering Research Council of Canada and
by the Canada Research Chair Program.
Xianfu Wang was partially supported by the Natural
Sciences and Engineering Research Council of Canada.

%\small


\begin{thebibliography}{99}

\bibitem{Bau:1}
H.H.\ Bauschke, 
``The composition of finitely many projections onto closed convex
sets in Hilbert space is asymptotically regular'',
\emph{Proceedings of the American Mathematical Society},
vol.~131, pp.~141--146, 2003.

\bibitem{BBW}
H.H.\ Bauschke, J.M.\ Borwein, and X.\ Wang,
``Fitzpatrick functions and continuous linear
monotone operators'',
\emph{SIAM Journal on Optimization},
vol.~18, pp.~789--809, 2007.

\bibitem{BBWY5}
H.H.\ Bauschke, J.M.\ Borwein,  X.\ Wang and L.\ Yao,
``The Br\'ezis-Browder Theorem in a general Banach space'', submitted;
\texttt{http://arxiv.org/abs/1110.5706v1}, October 2011.

\bibitem{BBHM}
H.H.\ Bauschke, R.I.\ Bo\c{t}, W.L.\ Hare, and W.M.\ Moursi, 
``Attouch-Th\'era duality revisited:
paramonotonicity and operator splitting'', submitted;
\texttt{http://arxiv.org/abs/1110.4877v1}, October 2011.

\bibitem{BC2010}
H.H.\ Bauschke and P.L.\ Combettes,
``The Baillon-Haddad theorem revisited'',
\emph{Journal of Convex Analysis}, vol.~17, pp.~781--787, 2010. 

\bibitem{BC2011}
H.H.\ Bauschke and P.L.\ Combettes,
\emph{Convex Analysis and Monotone Operator Theory in Hilbert Spaces},
Springer, 2011.

\bibitem{BMMW}
H.H.\ Bauschke, V.\ Mart\'{\i}n M\'{a}rquez,
S.M.\ Moffat, and X.\ Wang,
``Compositions and convex combinations
of asymptotically regular firmly nonexpansive
mappings are also asymptotically regular'', submitted;
\texttt{http://arxiv.org/abs/1112.4923v1}, December 2011.

\bibitem{BMWrange}
H.H.\ Bauschke, S.M.\ Moffat, and X.\ Wang,
``Near equality, near convexity, sums of maximally monotone 
operators, and averages of firmly nonexpansive mappings'',
\emph{Mathematical Programming (Series~B)}, in press. 

\bibitem{BWY3}
H.H.\ Bauschke, X.\ Wang, and L.\ Yao,
``Monotone linear relations: maximality
and Fitzpatrick functions'',
\emph{Journal of Convex Analysis}, vol.~16, pp.~673--686, 2009.

\bibitem{BWY4}
H.H.\ Bauschke, X.\ Wang, and L.\ Yao,
``An answer to S.\ Simons' question
on the maximal monotonicity of the sum of a maximal monotone linear operator and a normal cone operator",
\emph{Set-Valued and Variational Analysis}, vol.~17, pp.~195--201, 2009.

\bibitem{BWY8}
H.H.\ Bauschke, X.\ Wang, and L.\ Yao,
``On Borwein-Wiersma Decompositions of monotone linear
relations'', \emph{SIAM Journal on Optimization}, vol.~20, pp.~2636--2652,
 2010.

\bibitem{BorVan}
J.M.\ Borwein and J.D.\ Vanderwerff,
\emph{Convex Functions},
Cambridge University Press, 2010.

\bibitem{Brezis}
H.\ Brezis, 
\emph{Operateurs Maximaux Monotones et Semi-Groupes de Contractions dans les
Espaces de Hilbert},
North-Holland/Elsevier, 1973.

\bibitem{BrezisHaraux}
H.\ Brezis and A.\ Haraux,
``Image d'une somme d'op\'{e}rateurs monotones et applications'',
\emph{Israel Journal of Mathematics}, vol.~23, pp.~165--186, 1976.

\bibitem{BurIus}
R.S.\ Burachik and A.N.\ Iusem,
\emph{Set-Valued Mappings and Enlargements of Monotone Operators},
Springer-Verlag, 2008.

\bibitem{censor}
Y. Censor, A.N.\ Iusem and S.A.\ Zenios,
``An interior point method with Bregman functions
for the variational inequality problem with paramonotone operators'',
\emph{Mathematical Programming (Series~A)}, vol.~81, pp.~373--400,
1998.

\bibitem{Cross}
R.\ Cross, \emph{Multivalued Linear Operators},
Marcel Dekker, 1998. % New York, 1998.

\bibitem{EckBer}
J.\ Eckstein and D.P.\ Bertsekas,
``On the Douglas-Rachford splitting method
and the proximal point algorithm for maximal monotone
operators'',
\emph{Mathematical Programming (Series~A)}, vol.~55, pp.~293--318, 1992. 

\bibitem{Fitz88}
S.\ Fitzpatrick, ``Representing monotone operators by convex
functions'', in  \emph{Workshop/Miniconference on Functional
Analysis and Optimization (Canberra 1988)}, Proceedings of the
Centre for Mathematical Analysis, Australian National University,
vol.~20, Canberra, Australia, pp.~59--65, 1988.

\bibitem{GK}
K.\ Goebel and W.A.\ Kirk,
\emph{Topics in Metric Fixed Point Theory},
Cambridge University Press, 1990. 

\bibitem{GR}
K.\ Goebel and S.\ Reich, 
\emph{Uniform Convexity, Hyperbolic Geometry, and Nonexpansive
Mappings}, Marcel Dekker, 1984. % New York

\bibitem{Halmos}
P.R. Halmos, \emph{A Hilbert Space Problem Book},
Van Nostrand Reinbold, %New York,
1967.

\bibitem{iusem}
A.N.\ Iusem, ``On some properties of paramonotone operators'',
\emph{Journal of Convex Analysis},
vol.~5, pp.~269--278, 1998.

\bibitem{Megg}
R.E.\  Megginson,
\emph{An Introduction to Banach Space Theory},
Springer-Verlag, 1998.

\bibitem{Minty}
G.J.\ Minty,
``Monotone (nonlinear) operators in Hilbert spaces'',
\emph{Duke Mathematical Journal}, vol.~29, pp.~341--346, 1962. 

\bibitem{ph}
R.R.\ Phelps,
\emph{Convex Functions, Monotone Operators and
Differentiability},
2nd edition, Springer-Verlag, 1993.

\bibitem{PheSim}
R.R.\ Phelps and S.\ Simons,
``Unbounded linear monotone operators on nonreflexive Banach spaces'',
\emph{Journal of Convex Analysis}, vol.~5, pp.~303--328, 1998.

\bibitem{RockWets}
R.T.\ Rockafellar and R.J-B Wets,
\emph{Variational Analysis}, 3rd printing,
Springer-Verlag, 2009.

\bibitem{Si}
S.\  Simons,
\emph{Minimax and Monotonicity},
Springer-Verlag, 1998.

\bibitem{Si2}
S.\ Simons, \emph{From Hahn-Banach to Monotonicity},
% Lecture Notes in Mathematics, Vol. 1693,
Springer-Verlag, 2008.

\bibitem{YaoPhD}
L.\ Yao,
\emph{On Monotone Linear Relations and the Sum Problem in Banach
Spaces}, Ph.D.~thesis, Mathematics, University of British Columbia
(Okanagan campus), 2011; \texttt{http://hdl.handle.net/2429/39970}.

\bibitem{Yao2}
L.\ Yao,  ``The sum of a maximally monotone
linear relation and the subdifferential of a proper lower semicontinuous
convex function is maximally monotone'', 
\emph{Set-Valued and Variational Analysis}, in press.

\bibitem{Zalinescu}
C.\ Z\u{a}linescu,
\emph{Convex Analysis in General Vector Spaces}, World Scientific
Publishing, 2002.

\bibitem{Zeidler2A}
E.\ Zeidler,
\emph{Nonlinear Functional Analysis and its Applications II/A:
Linear Monotone Operators},
Springer-Verlag, 1990.

\bibitem{Zeidler2B}
E.\ Zeidler,
\emph{Nonlinear Functional Analysis and its Applications II/B:
Nonlinear Monotone Operators},
Springer-Verlag,
1990.
\end{thebibliography}
\end{document}